\theoremstyle{plain}
\newtheorem{theorem}{Theorem}[section]
\newtheorem{lemma}[theorem]{Lemma}
\newtheorem{proposition}[theorem]{Proposition}
\theoremstyle{definition}
\newtheorem{definition}[theorem]{Definition}
\newtheorem{example}[theorem]{Example}
\newtheorem{remark}[theorem]{Remark}
\numberwithin{equation}{section}
\newcommand{\baroplus}{\mathop{\overline{\bigoplus}}}
\renewcommand{\ker}{\mathrm{Ker}\,}
\title{Relation between generalized and ordinary cluster algebras}
\author[R. Akagi]{Ryota Akagi}
\author[T. Nakanishi]{Tomoki Nakanishi}
\address{Graduate School of Mathematics, Nagoya University, Chikusa-ku, Nagoya, 464-8604, Japan}
\email{ryota.akagi.e6@math.nagoya-u.ac.jp}
\email{nakanisi@math.nagoya-u.ac.jp}
\begin{document}

\begin{abstract}
Recently, Ramos and Whiting showed that any generalized cluster algebra of geometric type is isomorphic to a quotient of a subalgebra of a certain cluster algebra.
Based on their idea and method, we show that the same property holds for any generalized cluster algebra with $y$-variables in an arbitrary semifield. We also present the relations between the $C$-matrices, the $G$-matrices, and the $F$-polynomials of a generalized cluster pattern and those of the corresponding composite cluster pattern.
\end{abstract}

\maketitle

\section{Introduction}

Cluster algebras were introduced by Fomin and Zelevinsky \cite{FZ02}.
They naturally appear in several areas of mathematics and have been studied and developed by many authors. In \cite{CS14}, Chekhov and Shapiro introduced a natural generalization of cluster algebras called {\em generalized cluster algebras} (GCAs) by replacing the binomial exchange polynomials of (ordinary) cluster algebras (CAs) with the multinomial ones.
It turned out that GCAs share several important properties with CAs, for example, the Laurent phenomenon \cite{CS14}, the separation formulas \cite{Nak15a}, quantizations \cite{Nak15b, BCMX18, FPY24}, scattering diagram formulation \cite{CKM21, Mou24}, the Laurent positivity \cite{Mou24, BLM25a, BLM25b}, etc.
\par
Suggested by the question in \cite[\S5]{CS14}, Ramos and Whiting \cite{RW25} recently showed that any GCA $\mathcal{A}^{\mathrm{g}}$ \emph{of geometric type} (i.e., with coefficients in a tropical semifield) is isomorphic to a \emph{subquotient} of a certain CA $\mathcal{A}$, that is, a quotient of a subalgebra of $\mathcal{A}$. This identification finally reveals the true nature of GCAs.
Let us summarize the main idea in \cite{RW25} without explaining details.
Firstly, the exchange matrices of $\mathcal{A}$ are given by certain  extensions (``enlargements'') of the exchange matrices of $\mathcal{A}^{\mathrm{g}}$ so that the ``composite mutations'' of the enlarged exchange matrices are compatible with the generalized mutations of the exchange matrices of $\mathcal{A}^{\mathrm{g}}$.
Secondly, we consider the product of the cluster variables in $\mathcal{A}$ mutated by each composite mutation. We restrict our attention to the {\em subalgebra} $\mathcal{A}^{\mathrm{c}}$ of $\mathcal{A}$ generated by these cluster monomials.
Lastly, we impose a certain relation on $\mathcal{A}^{\mathrm{c}}$ so that the exchange polynomials of $\mathcal{A}^{\mathrm{g}}$ coincide with the exchange polynomials of these cluster monomials in $\mathcal{A}^{\mathrm{c}}$. (This implies the factorization of the exchange polynomials of $\mathcal{A}^{\mathrm{g}}$ into binomials under the relation.)
Then, $\mathcal{A}^{\mathrm{g}}$ is isomorphic to the {\em quotient} of $\mathcal{A}^{\mathrm{c}}$ by the relation ideal.
\par
Due to the importance of the above result, in this paper we apply the same idea and method to GCAs with \emph{coefficients in an arbitrary semifield} and obtain a parallel result for them.
To do it, in contrast to \cite{RW25}, we concentrate on the class of GCAs \emph{with $y$-variables} in \cite{Nak15a, NR16, Nak24}, which is parallel to the one of CAs in \cite{FZ07}.
Compared with the formulation of GCAs in \cite{RW25}, the advantage of ours is two-folded.
Firstly, we are away from the specific problem arising from the exchange matrix presentation of tropical coefficients  in  \cite{RW25}.
(A considerable part of the paper \cite{RW25} is devoted to resolve the issues related to it.)
Secondly, the parallelism of the factorization mechanism between cluster variables ($x$-variables) and coefficients ($y$-variables) is transparent. We also present the relations between the $C$-matrices, the $G$-matrices, and the $F$-polynomials of a generalized cluster pattern and those of the corresponding composite cluster pattern.
\par
Though mostly our method is a straightforward extension of the one in  \cite{RW25}, there are some subtleties which we need to take care of.
As explained above, to obtain the desired factorization of the exchange polynomials for the $x$-variables of a GCA $\mathcal{A}^{\mathrm{g}}$, we need to impose a certain relation on the subalgebra $\mathcal{A}^{\mathrm{c}}$ of the corresponding CA $\mathcal{A}$.
The relation ideal of $\mathcal{A}^{\mathrm{c}}$ is prime; thus, its quotient is an integral domain.
(This is consistent with the fact that the quotient is isomorphic to $\mathcal{A}^{\mathrm{g}}$, which is an integral domain.)
However, if we extend the relation to $\mathcal{A}$, the relation ideal of $\mathcal{A}$ might be non-prime so that its quotient is not an integral domain, in general.
A similar situation occurs also for $y$-variables, and we need to handle it carefully.
In fact, clarifying these details is one of the main achievement of the paper. The summary of this point is given in the diagram \eqref{eq: diagram}.
\par
The content of the paper is as follows.
In \Cref{sec: definition}, we recall the definition of generalized cluster algebras with $y$-variables. In \Cref{sec: composite cluster patterns}, we introduce composite mutations and composite cluster patterns in some ordinary cluster algebras. In \Cref{sec: Y-pattern}, we realize the generalized $Y$-patterns by the composite ones. In \Cref{sec: cluster algebras}, we realize the generalized cluster patterns and the generalized cluster algebras by the composite ones. In \Cref{sec: separation formulas}, we present the relations between the $C$-matrices, the $G$-matrices and the $F$-polynomials of generalized cluster algebras and those of composite ones.
\par
We thank Zhichao Chen and Yasuaki Gyoda for discussions. We thank Woojin Choi for pointing out the error in the early version of the manuscript.
This work is supported in part by JSPS grant No. JP22H01114 and No. JP25KJ1438. Ryota Akagi is also supported by Chubei Itoh Foundation.

\section{Generalized cluster algebras with $y$-variables}\label{sec: definition}
In this section, we recall the definition of generalized cluster algebras (GCAs) with $y$-variables following \cite{Nak15a, NR16, Nak24}.

\subsection{Semifields}\label{sec: semifield}
Following \cite{FZ02}, we recall the definition of a semifield.
\begin{definition}[Semifield]
A triplet $(\mathbb{P},\oplus,\cdot)$ is called a {\em semifield} if it satisfies the following conditions.
\begin{itemize}
\item $\mathbb{P}$ is an abelian group with respect to the multiplication $\cdot$.
\item $\mathbb{P}$ is a commutative semigroup with respect to the addition $\oplus$.
\item The distributive property $p(q_1\oplus q_2)=pq_1 \oplus pq_2$ holds for any $p,q_1,q_2 \in \mathbb{P}$.
\end{itemize}
For any semifield $\mathbb{P}$, we set $\mathbb{P}_0=\mathbb{P} \sqcup \{0\}$, and we extend the addition and the multiplication to $\mathbb{P}_0$ by $0 \oplus p = p \oplus 0 = p$ and $0 \cdot p = p \cdot 0 = 0$ for any $p \in \mathbb{P}_0$. Since $\mathbb{P}_0$ is not an abelian group with respect to $\cdot$, it is not a semifield in the above sense.
\end{definition}
Let $\mathbb{ZP}$ be the group ring of $(\mathbb{P},\cdot)$ over $\mathbb{Z}$.
We set
\begin{equation}\label{eq: set NP}
\mathbb{NP}=\left\{\left.\ \sum_{j=1}^{r} a_{j}p_{j} \in \mathbb{ZP}\ \right|\ r \in \mathbb{Z}_{\geq 0},\ a_{j} \in \mathbb{Z}_{\geq 0},\ p_j \in \mathbb{P} \right\},
\end{equation}
and we define an additive and multiplicative homomorphism $\pi_{\mathbb{P}_0}  :  \mathbb{NP}  \rightarrow  \mathbb{P}_0$ by
\begin{equation}\label{eq: specialzation for coefficients}
\pi_{\mathbb{P}_0}\left(\sum_{j=1}^{r}a_jp_j\right)  =  \bigoplus_{j=1}^{r}a_{j\oplus}p_j \quad (a_j \in \mathbb{Z}_{\geq 0}, p_j \in \mathbb{P}),
\end{equation}
where $a_{\oplus}p=p \oplus p \oplus \cdots \oplus p$ ($a \in \mathbb{Z}_{\geq 0}$, $p \in \mathbb{P}$) is the sum of $a$ terms of $p$ with respect to the addition $\oplus$ in $\mathbb{P}$. This should be distinguished from $ap=p+p+\cdots+p$, which is the sum in $\mathbb{ZP}$.
\par
The ring $\mathbb{ZP}$ is an integral domain \cite{FZ02}. The fraction field of $\mathbb{ZP}$ is denoted by $\mathbb{QP}$.
\subsection{Generalized seeds and GCAs}
When we consider a GCA with $y$-variables, we always fix the following data:
\begin{itemize}
\item a positive integer $n \in \mathbb{Z}_{\geq 1}$ called the {\em rank}.
\item an $n$-tuple of positive integers ${\bf r}=(r_1,\dots,r_n) \in \mathbb{Z}_{\geq 1}^n$ called the {\em mutation degree}.
\item a semifield $\mathbb{P}$ called the {\em coefficient semifield}.
\item a field of rational functions $\mathcal{F}$ in $n$-variables with coefficients in $\mathbb{QP}$ called the {\em ambient field}.
\end{itemize}

\begin{definition}
A quadruplet $\Sigma=({\bf x},{\bf y},{\bf Z},B)$ is called a {\em generalized seed} if it satisfies the following conditions.
\begin{itemize}
\item ${\bf x}=(x_1,\dots,x_n)$ is an $n$-tuple of elements in $\mathcal{F}$ forming a free generating set. Each $x_i$ is called a {\em cluster variable} ({\em $x$-variable}).
\item ${\bf y}=(y_1,\dots,y_n)$ is an $n$-tuple of elements in $\mathbb{P}$. Each $y_i$ is called a {\em coefficient} ({\em $y$-variable}).
\item ${\bf Z}=(Z_1(u),\dots,Z_n(u))$ is an $n$-tuple of polynomials $Z_i(u) \in \mathbb{NP}[u]$ of the form
\begin{equation}
Z_{i}(u)=1+\hat{z}_{i1}u+\hat{z}_{i2}u^2+\cdots+\hat{z}_{i,r_{i}-1}u^{r_i-1}+u^{r_i} \quad (\hat{z}_{il} \in \mathbb{NP}).
\end{equation}
Each $Z_{i}(u)$ is called an {\em exchange polynomial}. We allow some of $\hat{z}_{il}$ to be zero. We set $\hat{z}_{i0}=\hat{z}_{ir_i}=1$.
\item $B=(b_{ij}) \in \mathrm{Mat}_n(\mathbb{Z})$ is a skew-symmetrizable matrix, that is, there is a rational diagonal matrix $D=\mathrm{diag}(d_1,\dots,d_n)$ with $d_i > 0$ such that $DB$ is skew-symmetric. It is called an {\em exchange matrix}.
\end{itemize}
For a generalized seed $\Sigma$, we define {\em $\hat{y}$-variables} as
\begin{equation}
\hat{y}_k=y_k\prod_{j=1}^n x_j^{{b}_{jk}} \quad (k=1,\dots,n).
\end{equation}
\par
For a generalized seed $({\bf x},{\bf y},{\bf Z},B)$, we sometimes focus on the $y$-variables ${\bf y}$ only. In this case, the triplet $({\bf y},{\bf Z},B)$ is called a {\em generalized $Y$-seed}.
\end{definition}
For an exchange polynomial $Z_k(u)=\sum_{l=0}^{r_k}\hat{z}_{kl}u^l$ and $y \in \mathbb{P}$, we define the {\em specialization} $Z_{k}|_{\mathbb{P}_0}(y)$ of $Z_k(u)$ at $y$ by
\begin{equation}\label{eq: specialization of exchange polynomials}
Z_k|_{\mathbb{P}_0}(y)=\bigoplus_{l=0}^{r_k}\pi_{\mathbb{P}_0}(\hat{z}_{kl})y^l.
\end{equation}
Since $\hat{z}_{k0}=\hat{z}_{kr_k}=1$, we have $Z_{k}|_{\mathbb{P}_0}(y) \in \mathbb{P}$.
We write $\pi_{\mathbb{P}_0}(\hat{z}_{kl}) \in \mathbb{P}_0$ as $z_{kl}$, for simplicity.
\begin{definition}
Let $\Sigma=({\bf x},{\bf y},{\bf Z},B)$ be a generalized seed. Then, the {\em mutation} $\mu_{k}^{\bf r}(\Sigma)=({\bf x}',{\bf y}',{\bf Z}',B')$ in direction $k=1,2,\dots,n$ with the mutation degree ${\bf r}$ is defined as follows:
\begin{itemize}
\item ${\bf x}'=(x_1,\dots,x_k',\dots,x_n)$, where
\begin{equation}\label{eq: mutation of x}
x_k'=x_k^{-1}\left(\prod_{j=1}^{n}x_j^{[-b_{jk}]_{+}}\right)^{r_k}\frac{Z_{k}(\hat{y}_k)}{Z_{k}|_{\mathbb{P}_0}(y_k)}.
\end{equation}
\item ${\bf y}'=(y_1',\dots,y_n')$, where
\begin{equation}\label{eq: mutation of y}
y_i'=\begin{cases}
y_k^{-1} & i=k,\\
y_i\left(y_k^{[b_{ki}]_{+}}\right)^{r_k}Z_{k}|_{\mathbb{P}_0}(y_k)^{-b_{ki}} & i\neq k.
\end{cases}
\end{equation}
\item ${\bf Z}'=(Z_1(u),\dots,Z_k^{\mathrm{rec}}(u),\cdots,Z_n(u))$, where $Z_{k}^{\mathrm{rec}}(u)=u^{r_k}Z_k(u^{-1})$ is the reciprocal polynomial of $Z_{k}(u)$.
\item $B'=(b'_{ij})= \mu^{\bf r}_k(B)$, where
\begin{equation}\label{eq: B-mutation}
b'_{ij}=\begin{cases}
-b_{ij} & i = k\ \textup{or}\ j=k,\\
b_{ij}+r_k(b_{ik}[b_{kj}]_{+}+[-b_{ik}]_{+}b_{kj}) & i,j \neq k.
\end{cases}
\end{equation}
\end{itemize}
\end{definition}
Let $\mathbb{T}_n$ be the $n$-regular tree with edges labeled by $1,2,\dots,n$. We say that a pair of vertices $t,t' \in \mathbb{T}_n$ are {\em $k$-adjacent} ($k=1,\dots,n$) if $t$ and $t'$ are connected with an edge labeled by $k$. Let us fix a given vertex (the {\em initial vertex}) $t_0 \in \mathbb{T}_n$. Then, the collection of generalized seeds ${\bf \Sigma}=\{\Sigma_t\}_{t \in \mathbb{T}_n}$ is uniquely determined by the {\em initial seed} $\Sigma_{t_0}$ and the condition $\Sigma_{t'}=\mu_k^{\bf r}(\Sigma_{t})$ for any pair of $k$-adjacent vertices $t,t' \in \mathbb{T}_n$. The collection ${\bf \Sigma}$ is called the {\em generalized cluster pattern} with the initial seed $\Sigma_{t_0}$. The collection of the generalized $Y$-seeds ${\bf \Upsilon}=\{({\bf y}_t,{\bf Z}_t,B_t)\}_{t \in \mathbb{T}_n}$ of ${\bf \Sigma}$ is called the {\em generalized $Y$-pattern} of ${\bf \Sigma}$.

\begin{definition}[Generalized cluster algebra]
Let ${\bf \Sigma}$ be a generalized cluster pattern.
Let $\mathcal{X}$ be the set of all cluster variables appearing in ${\bf \Sigma}$. Then, the {\em generalized cluster algebra} $\mathcal{A}({\bf \Sigma})$ of ${\bf \Sigma}$ is defined by the $\mathbb{ZP}$-subalgebra $\mathbb{ZP}[\mathcal{X}]$ of $\mathcal{F}$ generated by $\mathcal{X}$.
\end{definition}
When ${\bf r}=(1,1,\dots,1)$, a generalized cluster pattern and a GCA reduce to an {\em ordinary cluster pattern} and an {\em ordinary cluster algebra} (CA) in \cite{FZ07}. In this case, its exchange polynomials are given by $Z_{i}(u)=1+u$. So, we omit ${\bf Z}$ and write an ordinary seed as $\Sigma=({\bf x},{\bf y},B)$. The mutation $\mu_k^{\bf r}$ also reduces to the ordinary mutation $\mu_{k}$ in \cite{FZ07}.

\section{Composite cluster patterns}\label{sec: composite cluster patterns}
In this section, following \cite{RW25}, for a generalized cluster pattern, we introduce a corresponding {\em composite cluster pattern} whose mutations are given by the {\em composite mutations}.
\subsection{Enlargements of exchange matrices}
For the mutation degree ${\bf r}=(r_1,\dots,r_n)$ of a generalized seed $\Sigma$, the sum $\mathcal{N}=r_1+\cdots+r_n$ is called the {\em pseudo-rank} of $\Sigma$, which will be the rank of a corresponding ordinary seed.
Let $\mathbbm{1}^{l \times m}$ be the $l \times m$ matrix whose all components are $1$.
\begin{definition}[Enlargement]
\label{def: canonical extension}
Let $B$ be the exchange matrix of a given generalized seed with mutation degree ${\bf r}$. We define an $\mathcal{N} \times \mathcal{N}$ matrix $\mathcal{B} \in \mathrm{Mat}_{\mathcal{N}}(\mathbb{Z})$ with the $n \times n$ block decomposition whose $(i,j)$th block $\mathcal{B}^{ij}$ is given by
\begin{equation}\label{eq: definition of enlargements}
\mathcal{B}^{ij}=b_{ij}\mathbbm{1}^{r_{i}\times r_j}.
\end{equation}
We call $\mathcal{B}$ the {\em enlargement} of $B$ with respect to ${\bf r}$. Let $\mathcal{B}^{ij}_{lm}$ denote the $(l,m)$th entry of the $(i,j)$th block $\mathcal{B}^{ij}$. 
\end{definition}
\begin{example}\label{exam: enlargement}
When ${\bf r}=(2,3)$ and $B=\left(\begin{smallmatrix}
0 & 1\\
-2 & 0
\end{smallmatrix}\right)$, the enlargement of $B$ is given by
\begin{equation}\label{eq: example of an enlargement}
\mathcal{B}=\left(\begin{array}{cc|ccc}
0 & 0 & 1 & 1 & 1\\
0 & 0 & 1 & 1 & 1\\
\hline
-2 & -2 & 0 & 0 & 0\\
-2 & -2 & 0 & 0 & 0\\
-2 & -2 & 0 & 0 & 0
\end{array}\right).
\end{equation}
\end{example}
Let us define the index set
\begin{equation}\label{eq: D}
\mathcal{D}=\{(i,l) \mid i=1,\dots,n;\ l=1,\dots,r_i\}.
\end{equation}
We write $\mathcal{B}^{ij}_{lm}$ also as $\mathcal{B}_{il,jm}$ ($(i,l),(j,m) \in \mathcal{D}$).
We consider an ordinary seed whose exchange matrix is $\mathcal{B}$. Then, for each $(i,l) \in \mathcal{D}$, the mutation in direction $r_1+r_2+\cdots+r_{i-1}+l$ is denoted by $\mu_{il}$ in accordance with the above parametrization of $\mathcal{B}$.
\begin{definition}[Composite mutation]
\label{def: composite mutation}
For each $k=1,2,\dots,n$, we define the {\em composite mutation} $\mu^{\mathrm{c}}_{k}$ for an ordinary seed $({\bf x},{\bf y},\mathcal{B})$ by
\begin{equation}\label{eq: definition of composite mutations}
\mu^{\mathrm{c}}_{k}=\mu_{kr_k} \circ \cdots \circ \mu_{k2} \circ \mu_{k1}.
\end{equation}
\end{definition}
The composite mutation $\mu^{\mathrm{c}}_k$ is independent of the order of the composition thanks to $\mathcal{B}^{kk}=O$. 
Moreover, the composite mutations are compatible with the enlargement as follows.
\begin{lemma}[{\cite[Lem.~22]{RW25}}]\label{lem: canonical extension}
For a generalized cluster pattern with mutation degree ${\bf r}$, let $\{B_{t}\}_{t \in \mathbb{T}_n}$ be the collection of its exchange matrices. Let $\mathcal{B}_{t}$ be the enlargement of $B_t$ with ${\bf r}$. Then, for any pair of $k$-adjacent vertices $t,t' \in \mathbb{T}_n$, we have
\begin{equation}
\mathcal{B}_{t'}=\mu^{\mathrm{c}}_k(\mathcal{B}_t).
\end{equation}
\end{lemma}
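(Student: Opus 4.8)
The plan is to compute $\mu^{\mathrm{c}}_k(\mathcal{B}_t)$ entry by entry and check that the result is constant on each $n \times n$ block with the value prescribed by the generalized mutation rule \eqref{eq: B-mutation}, i.e. that it is the enlargement of $B_{t'}=\mu^{\bf r}_k(B_t)$. Since the single mutations $\mu_{k1},\dots,\mu_{kr_k}$ composing $\mu^{\mathrm{c}}_k$ pairwise commute (as already noted, thanks to $\mathcal{B}^{kk}=O$), I may fix the order $(k,1),(k,2),\dots,(k,r_k)$ and track the evolution of each entry under this sequence. Write $\mathcal{B}=\mathcal{B}_t$, so that $\mathcal{B}_{il,jm}=b_{ij}$ for all admissible $l,m$, and let $b'_{ij}$ denote the entries of $B_{t'}$.

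First I would establish the structural fact that drives everything: throughout the whole sequence the $(k,k)$-block stays identically zero. This is clear at the start, where $\mathcal{B}_{kl',kl''}=b_{kk}=0$, and is preserved by each $\mu_{kl_0}$, because the ordinary correction term for any within-block entry carries a factor $\mathcal{B}_{kl_0,kl''}=0$ (or flips a $0$ in the mutated row/column). As an immediate consequence, mutating in direction $(k,l_0)$ leaves every coupling entry $\mathcal{B}_{il,kl'}$ and $\mathcal{B}_{kl',jm}$ with $l'\neq l_0$ \emph{unchanged}, since its correction term again contains the factor $\mathcal{B}_{kl_0,kl'}=0$. Hence each coupling entry is untouched until the single mutation in its own direction, which merely flips its sign.

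Next I would track a generic off-block entry $\mathcal{B}_{il,jm}$ with $i,j\neq k$. At the instant $\mu_{kl_0}$ is applied, the previous paragraph guarantees that the entries feeding the correction still carry their initial values $\mathcal{B}_{il,kl_0}=b_{ik}$ and $\mathcal{B}_{kl_0,jm}=b_{kj}$, as neither has yet been reached by its own mutation. Thus each of the $r_k$ steps adds the identical increment $b_{ik}[b_{kj}]_{+}+[-b_{ik}]_{+}b_{kj}$, so after all of them $\mathcal{B}_{il,jm}$ becomes $b_{ij}+r_k(b_{ik}[b_{kj}]_{+}+[-b_{ik}]_{+}b_{kj})=b'_{ij}$, precisely \eqref{eq: B-mutation}. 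For the entries meeting the $k$-block the sign-flip observation gives $\mathcal{B}_{il,kl'}\mapsto -b_{ik}=-b'_{ik}$ and $\mathcal{B}_{kl',jm}\mapsto -b_{kj}=-b'_{kj}$, while the $(k,k)$-block remains $O=b'_{kk}\mathbbm{1}^{r_k\times r_k}$. In every case the outcome is constant on each block with value $b'_{ij}$, i.e. the enlargement of $B_{t'}$.

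The computation is elementary once this bookkeeping is in place, so the one point demanding care—and the step I would isolate as a separate claim before running the argument—is the assertion that the coupling entries keep their initial values until their own mutation, since a wrong value there would silently corrupt the multiplicity factor $r_k$. This rests entirely on $\mathcal{B}^{kk}=O$, which is exactly the feature of the enlargement. A minor auxiliary check is that the within-block off-diagonal entries $\mathcal{B}_{il,il'}$ with $i\neq k$ stay zero: they accumulate $r_k(b_{ik}[b_{ki}]_{+}+[-b_{ik}]_{+}b_{ki})$, which vanishes because skew-symmetrizability of $B$ forces $b_{ik}$ and $b_{ki}$ to have opposite signs (or both vanish), matching $b'_{ii}=0$.
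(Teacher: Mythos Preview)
Your proposal is correct and follows essentially the same approach as the paper. The paper's proof is simply a compressed version of yours: it writes down the composite-mutation formula $(\mathcal{B}')^{ij}_{lm}=\mathcal{B}^{ij}_{lm}+\sum_{p=1}^{r_k}(\mathcal{B}^{ik}_{lp}[\mathcal{B}^{kj}_{pm}]_{+}+[-\mathcal{B}^{ik}_{lp}]_{+}\mathcal{B}^{kj}_{pm})$ directly and then substitutes, implicitly relying on exactly the invariant you isolate (that $\mathcal{B}^{kk}=O$ forces the coupling entries to retain their initial values until their own mutation), whereas you spell out that bookkeeping step by step; your extra check on the within-block off-diagonal entries for $i\neq k$ is subsumed in the paper by the general $i,j\neq k$ case with $i=j$.
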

\begin{proof}
Let $\mathcal{B}$ be the enlargement of $B=(b_{ij})$. Let $B'=(b_{ij}')=\mu_k^{\bf r}(B)$ and $\mathcal{B}'=\mu_k^{\mathrm{c}}(\mathcal{B})$. Then, for $(i,l),(j,m) \in \mathcal{D}$ with $i,j \neq k$, we have
\begin{equation}
\begin{aligned}
(\mathcal{B}')^{ij}_{lm}&=\mathcal{B}^{ij}_{lm}+\sum_{p=1}^{r_k}(\mathcal{B}^{ik}_{lp}[\mathcal{B}^{kj}_{pm}]_{+}+[-\mathcal{B}^{ik}_{lp}]_{+}\mathcal{B}^{kj}_{pm})\\
&=b_{ij}+r_k(b_{ik}[b_{kj}]_{+}+[-b_{ik}]_{+}b_{kj})=b'_{ij}.
\end{aligned}
\end{equation}
For $i=k$ or $j=k$, we have $(\mathcal{B}')^{ij}_{lm}=-\mathcal{B}^{ij}_{lm}=-b_{ij}=b_{ij}'$ because $\mathcal{B}^{kk}=O$.
\end{proof}

\subsection{Composite cluster patterns}
Let $\mathcal{B}$ be the matrix in \eqref{eq: definition of enlargements}. In accordance with the parametrization of $\mathcal{B}$, when we consider an ordinary seed $\Sigma=({\bf x},{\bf y},\mathcal{B})$, we parametrize ${\bf x}$ and ${\bf y}$ by $\mathcal{D}$ as ${\bf x}=(x_{il})_{(i,l) \in \mathcal{D}}$ and ${\bf y}=(y_{il})_{(i,l) \in \mathcal{D}}$.
Then, we apply the composite mutations $\mu^{\mathrm{c}}_k$ in \eqref{eq: definition of composite mutations} to $\Sigma$. Explicitly, we have the following formula.
\begin{lemma}\label{lem: composite mutation for each vertices}
For an ordinary seed $\Sigma=({\bf x},{\bf y},\mathcal{B})$, the composite mutation $\mu^{\mathrm{c}}_k(\Sigma)=({\bf x}',{\bf y}',\mu^{\mathrm{c}}_k(\mathcal{B}))$ in direction $k=1,\dots,n$ is given as follows:
\begin{align}
\label{eq: composite x-mutation}
x'_{il} &= \begin{cases}
\displaystyle{x_{kl}^{-1}\biggl(\prod_{(j,m) \in \mathcal{D}}x_{jm}^{[-{b}_{jk}]_{+}}\biggr)\frac{1+\hat{y}_{kl}}{1 \oplus y_{kl}}} & i=k,\\
x_{il} & i \neq k.
\end{cases}
\\
\label{eq: composite y-mutation}
y'_{il} &= \begin{cases}
y_{kl}^{-1} & i=k,\\
\displaystyle{y_{il}\left(\prod_{m=1}^{r_k} y_{km}\right)^{[{b}_{ki}]_{+}}\left(\prod_{m=1}^{r_k}(1\oplus y_{km})\right)^{-{b}_{ki}}} & i \neq k.
\end{cases}
\end{align}
\end{lemma}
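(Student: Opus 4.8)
The plan is to compute the effect of the composition $\mu^{\mathrm{c}}_k = \mu_{kr_k}\circ\cdots\circ\mu_{k1}$ by applying the $r_k$ ordinary single mutations in the fixed order $\mu_{k1},\mu_{k2},\dots,\mu_{kr_k}$ and tracking how each variable evolves. The entire argument rests on the single structural fact that the diagonal block $\mathcal{B}^{kk}=O$ vanishes, so that the mutated directions $(k,1),\dots,(k,r_k)$ are pairwise non-adjacent. The first step is to record that this non-adjacency \emph{persists} throughout the composite mutation: for $m'\neq m$, mutating in direction $(k,m')$ leaves the intra-block entries $\mathcal{B}_{km,kq}$ zero and leaves the off-block entries $\mathcal{B}_{jm,kl}=b_{jk}$ ($j\neq k$) and $\mathcal{B}_{km,il}=b_{ki}$ ($i\neq k$) unchanged, because in the ordinary (${\bf r}=(1,\dots,1)$) specialization of the matrix mutation rule \eqref{eq: B-mutation} every correction term contains a factor drawn from the zero block $\mathcal{B}^{kk}$. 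The same vanishing yields the independence of order already noted after \Cref{def: composite mutation}.

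With this in hand, the key consequence I would extract is a \emph{decoupling} statement: applying $\mu_{km}$ changes only $x_{km}$ and $y_{km}$ among the data indexed by the $k$-block, and it leaves each $\hat{y}_{kp}$ with $p\neq m$ unchanged, since $\hat{y}_{kp}=y_{kp}\prod_{\delta}x_{\delta}^{\mathcal{B}_{\delta,kp}}$ involves neither $y_{km}$ nor $x_{km}$ (as $\mathcal{B}_{km,kp}=0$). Running the mutations in the order above, it follows that at the instant $\mu_{kl}$ is applied the quantities $y_{kl}$, $\hat{y}_{kl}$, and all $x_{jm}$ with $j\neq k$ still carry their \emph{original} values.

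The three formulas then fall out by substitution into the ordinary mutation rules, i.e. the ${\bf r}=(1,\dots,1)$ cases of \eqref{eq: mutation of x} and \eqref{eq: mutation of y}. For $i\neq k$ the variable $x_{il}$ is never in a mutated direction, so $x'_{il}=x_{il}$. For $x'_{kl}$ I would apply \eqref{eq: mutation of x} in direction $(k,l)$: the exponents $[-\mathcal{B}_{\delta,kl}]_{+}$ vanish on the $k$-block and equal $[-b_{jk}]_{+}$ for $\delta=(j,m)$ with $j\neq k$, while $\hat{y}_{kl}$ and $y_{kl}$ are original by the decoupling; as the later $\mu_{km}$ ($m>l$) do not touch $x_{kl}$, this is the final value, giving \eqref{eq: composite x-mutation}. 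For the $y$-variables, $y_{kl}$ is inverted exactly once (by $\mu_{kl}$) and is untouched by the others, giving $y'_{kl}=y_{kl}^{-1}$; and for $i\neq k$ each step $\mu_{km}$ multiplies $y_{il}$ by $y_{km}^{[b_{ki}]_{+}}(1\oplus y_{km})^{-b_{ki}}$ with $y_{km}$ at its original value, so taking the product over $m=1,\dots,r_k$ yields \eqref{eq: composite y-mutation}.

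The main obstacle is purely the bookkeeping that every single mutation sees the original data rather than already-mutated data; this is exactly what the persistence of $\mathcal{B}^{kk}=O$ guarantees. Accordingly, I would isolate that decoupling as the one point to verify carefully and treat everything else as direct substitution into the ordinary mutation rules.
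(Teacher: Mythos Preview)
Your argument is correct and follows essentially the same route as the paper: apply the ordinary mutation rules \eqref{eq: mutation of x}, \eqref{eq: mutation of y} with $\mathbf{r}=(1,\dots,1)$ and use $\mathcal{B}^{ij}_{lm}=b_{ij}$ (in particular $\mathcal{B}^{kk}=O$). The paper states this in a single line, leaving the decoupling bookkeeping implicit, whereas you spell out carefully why each intermediate $\mu_{km}$ sees the original data; your added detail is sound and makes the argument self-contained.
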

\begin{proof}
They follow from $\mathcal{B}^{ij}_{lm}=b_{ij}$, \eqref{eq: mutation of x}, and \eqref{eq: mutation of y} with ${\bf r}=(1,\dots,1)$.
\end{proof}

\begin{definition}[Composite cluster pattern]
In the definition of an ordinary cluster pattern, we replace the mutations $\mu_k$ ($k=1,\dots,\mathcal{N}$) with the composite ones $\mu^{\mathrm{c}}_k$ ($k=1,\dots,n$). We call the resulting collection ${\bf \Sigma}^{\mathrm{c}}=\{\Sigma^{\mathrm{c}}_t\}_{t \in \mathbb{T}_n}$ of ordinary seeds the {\em composite cluster pattern} with the initial seed $\Sigma_{t_0}^{\mathrm{c}}$. This is a subcollection of the ordinary cluster pattern ${\bf \Sigma}$ of rank $\mathcal{N}$ with the same initial seed $\Sigma_{t_0}=\Sigma_{t_0}^{\mathrm{c}}$. We also define the {\em composite $Y$-pattern} ${\bf \Upsilon}^{\mathrm{c}}$ of ${\bf \Sigma}^{\mathrm{c}}$ as the collection of the $Y$-seeds in ${\bf \Sigma}^{\mathrm{c}}$.
\end{definition}

\section{Realization of generalized $Y$-pattern}\label{sec: Y-pattern}
In this section, for a given generalized $Y$-pattern ${\bf \Upsilon}^{\mathrm{g}}$, we construct a composite $Y$-pattern ${\bf \Upsilon}^{\mathrm{c}}$ such that ${\bf \Upsilon}^{\mathrm{g}}$ is recovered from ${\bf \Upsilon}^{\mathrm{c}}$.
\subsection{Extensions of semifields}
We fix a generalized seed $\Sigma=({\bf x},{\bf y},{\bf Z},B)$ with coefficients in $\mathbb{P}$ and mutation degree ${\bf r}$. Let us introduce formal variables $s_{il}$ ($(i,l) \in \mathcal{D}$) and set ${\bf s}=\{s_{il} \mid (i,l) \in \mathcal{D}\}$. Since $\mathbb{ZP}$ is an integral domain, the polynomial ring $\mathbb{ZP}[{\bf s}]$ in ${\bf s}$ is also so. Thus, we have the fraction field $\mathbb{QP}({\bf s})$ of $\mathbb{ZP}[{\bf s}]$. Let $\mathbb{NP}[{\bf s}] \subset \mathbb{ZP}[{\bf s}]$ be the set of all polynomials in ${\bf s}$ such that all coefficients belong to $\mathbb{NP}$ in \eqref{eq: set NP}. We introduce the {\em universal semifield} $\mathbb{QP}_{\mathrm{sf}}({\bf s})$ with coefficients in $\mathbb{P}$ by
\begin{equation}
\mathbb{QP}_{\mathrm{sf}}({\bf s})=\left\{\left. \frac{f}{g} \in \mathbb{QP}({\bf s})\ \right| f,g \in \mathbb{NP}[{\bf s}]\setminus\{0\}\right\}.
\end{equation}
This is a semifield with respect to the addition and multiplication of fractions in $\mathbb{QP}({\bf s})$. Below, we write $\mathbb{QP}_{\mathrm{sf}}({\bf s})$ as $\overline{\mathbb{P}}$, for simplicity.
\par
Note that the addition in $\overline{\mathbb{P}}$ is different from the one in $\mathbb{P}$. When we consider cluster algebras in \Cref{sec: cluster algebras}, we need to further consider the group rings $\mathbb{ZP}$ and $\mathbb{Z}\overline{\mathbb{P}}$, whose additions are also different. To distinguish them, we write these additions as follows:
\begin{itemize}
\item $\oplus$ is the addition in $\mathbb{P}$.
\item $\bar{\oplus}$ is the addition in $\overline{\mathbb{P}}$.
\item $+$ is the addition in $\mathbb{ZP}$ and $\mathbb{Z}\overline{\mathbb{P}}$.
\end{itemize}
Note that $\mathbb{NP}$ is a subset of both $\overline{\mathbb{P}}$ and $\mathbb{ZP}$. Since the additions $\bar{\oplus}$ and $+$ coincide in $\mathbb{NP}$, we use both symbols depending on the context.
\par
We would like to impose an equivalence relation $\sim$ on $\overline{\mathbb{P}}_0$,
which is compatible with the addition and the multiplication, so that the following relation holds:
\begin{equation}\label{eq: factorization}
Z_i|_{\overline{\mathbb{P}}_0}(y) \sim (1 \bar{\oplus}s_{i1}y)(1 \bar{\oplus}s_{i2}y)\cdots(1 \bar{\oplus} s_{ir_i}y).
\end{equation}
However, imposing such a relation directly on $\overline{\mathbb{P}}_0$ causes possible problems as follows.
\par
(1)
There is no guarantee that the resulting relation satisfies
the primeness condition, i.e., $r_1 r_2 \sim 0$ implies $r_1 \sim 0$ or $r_2 \sim 0$.
And, if the primeness condition does not hold,  we have $1 \sim 0$, so that the quotient  $\overline{\mathbb{P}}_0/{\sim}$
collapses to $\{ 0\}$.
\par
(2) Even if the primeness condition holds,
 there is no guarantee that the map $\mathbb{P} \rightarrow \overline{\mathbb{P}}_0/{\sim}$, $p \mapsto [p]$ is injective, in general.
\par 
To avoid these problems, for each $i=1,\dots,n$, we introduce the elementary symmetric polynomial in $s_{i1}$,\dots,$s_{ir_i}$ of degree $l=1,\dots,r_i$ as
\begin{equation}\label{eq: idea of the construction}
e_{il} = \baroplus_{1 \leq m_1<m_2<\cdots<m_l \leq r_i}s_{im_1}s_{im_2}\cdots s_{im_l}  \in \overline{\mathbb{P}}.
\end{equation}
Let ${\bf e}=\{{e}_{il} \mid (i,l) \in \mathcal{D}\}$. Let ${\bf Z}$ be the exchange polynomials of $\Sigma$. By extending the homomorphism $\pi_{\mathbb{P}_0}$ in \eqref{eq: specialzation for coefficients}, we define an additive and multiplicative homomorphism $\psi=\psi_{\bf Z}:\mathbb{NP}[{\bf e}] \to \mathbb{P}_0$ by
\begin{equation}
\psi(e_{il})=z_{il}.
\end{equation}
In particular, since $z_{ir_i}=\pi_{\mathbb{P}_0}(\hat{z}_{ir_i})=1$, we have
\begin{equation}\label{eq: highest order unitness}
\psi(e_{ir_i})=\psi(s_{i1}s_{i2}\cdots s_{ir_i})=1 \quad (i=1,\dots,n).
\end{equation}
We introduce the following subsemifield of $\overline{\mathbb{P}}$:
\begin{equation}\label{eq: definition of S for y}
\mathbb{S}=\left\{\left. \frac{f}{g} \in \overline{\mathbb{P}} \ \right|\ f,g \in \mathbb{NP}[{\bf e}],\ \psi(f), \psi(g) \neq 0  \right\}.
\end{equation}
We view $\mathbb{P} \subset \mathbb{S}$ by the natural identification $p=p/1$ ($p \in \mathbb{P}$).
We obtain the following semifield homomorphism (denoted by the same symbol $\psi$)
\begin{equation}\label{eq: definition of psi}
\begin{array}{rccccc}
\psi & : & \mathbb{S} & \to & \mathbb{P} &\\
 & & \displaystyle{\frac{f}{g}} & \mapsto & \displaystyle{\frac{\psi(f)}{\psi(g)}} & (f,g \in \mathbb{NP}[{\bf e}]).
\end{array}
\end{equation}
The map $\psi$ induces the equivalence relation in $\mathbb{S}$ such that
\begin{equation}
f \sim g \Longleftrightarrow \psi(f)=\psi(g) \quad (f,g \in \mathbb{S}).
\end{equation}
Let $[f]$ be the equivalence class of $f \in \mathbb{S}$, and let $\mathbb{S}/{\sim}$ be the quotient of $\mathbb{S}$ by $\sim$.
\begin{lemma}\label{lem: S tilde}
The quotient $\mathbb{S}/{\sim}$ is a semifield. Moreover,
the map $\mathbb{P} \to \mathbb{S}/{\sim}$ defined by $p \mapsto [p]$ is a semifield isomorphism.
\end{lemma}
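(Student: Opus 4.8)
The plan is to recognize $\sim$ as nothing more than the congruence induced by the semifield homomorphism $\psi:\mathbb{S}\to\mathbb{P}$ of \eqref{eq: definition of psi}, and then to run the standard first-isomorphism-theorem argument for semifields, using the sharp observation that $\psi$ restricts to the \emph{identity} on $\mathbb{P}\subset\mathbb{S}$. Since $\mathbb{S}$ is already a subsemifield of $\overline{\mathbb{P}}$ and $\psi:\mathbb{S}\to\mathbb{P}$ is already a semifield homomorphism (as established just before the statement), all that remains is formal quotient bookkeeping together with the splitting property.

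First I would record that $\sim$ is a semifield congruence. By definition $f\sim g$ iff $\psi(f)=\psi(g)$, so $\sim$ is exactly the fiber relation of $\psi$; because $\psi$ is additive and multiplicative, these fibers are compatible with all operations. Concretely, if $f\sim f'$ and $g\sim g'$, then $\psi(fg)=\psi(f)\psi(g)=\psi(f')\psi(g')=\psi(f'g')$, so $fg\sim f'g'$, and the analogous computations hold for $\bar{\oplus}$ and for the inverse $f^{-1}$. Hence the operations $[f][g]:=[fg]$, $[f]\,\bar{\oplus}\,[g]:=[f\,\bar{\oplus}\,g]$, and $[f]^{-1}:=[f^{-1}]$ on $\mathbb{S}/{\sim}$ are well defined, and the semifield axioms (associativity and commutativity of both operations, the multiplicative group structure with identity $[1]$, and distributivity) descend from those of $\mathbb{S}$. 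This proves $\mathbb{S}/{\sim}$ is a semifield; at the same time $\psi$ factors as $\psi=\bar{\psi}\circ\pi$, where $\pi:\mathbb{S}\to\mathbb{S}/{\sim}$ is the quotient map and $\bar{\psi}:\mathbb{S}/{\sim}\to\mathbb{P}$, $[f]\mapsto\psi(f)$, is a well-defined semifield homomorphism that is injective precisely because $\sim$ is the fiber relation of $\psi$.

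Next I would verify that $\psi$ splits the inclusion $\mathbb{P}\hookrightarrow\mathbb{S}$, which yields both surjectivity and the desired inverse. For $p\in\mathbb{P}$, viewed as $p/1\in\mathbb{S}$, we have $\psi(p)=\pi_{\mathbb{P}_0}(p)=p$, since $\psi$ extends $\pi_{\mathbb{P}_0}$ and $\pi_{\mathbb{P}_0}$ is the identity on $\mathbb{P}$; thus the composite $\mathbb{P}\hookrightarrow\mathbb{S}\xrightarrow{\psi}\mathbb{P}$ equals $\mathrm{id}_{\mathbb{P}}$. In particular $\psi$ is surjective, so $\bar{\psi}$ is a bijective homomorphism, hence a semifield isomorphism. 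Finally, the map $p\mapsto[p]$ of the statement is the composite $\mathbb{P}\hookrightarrow\mathbb{S}\xrightarrow{\pi}\mathbb{S}/{\sim}$, so it is a semifield homomorphism, and $\bar{\psi}([p])=\psi(p)=p$ shows that $\bar{\psi}\circ(p\mapsto[p])=\mathrm{id}_{\mathbb{P}}$; since $\bar{\psi}$ is invertible, $p\mapsto[p]$ must be $\bar{\psi}^{-1}$, hence itself a semifield isomorphism.

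I expect the only genuinely delicate point to lie upstream, in the well-definedness of $\psi$ on $\mathbb{S}$ underlying \eqref{eq: definition of psi}: if $f/g=f'/g'$ in $\overline{\mathbb{P}}$ with all of $f,g,f',g'\in\mathbb{NP}[{\bf e}]$, then $fg'=f'g$ is an honest identity in $\mathbb{NP}[{\bf s}]$, and applying the ring homomorphism $\psi:\mathbb{NP}[{\bf e}]\to\mathbb{P}_0$ gives $\psi(f)\psi(g')=\psi(f')\psi(g)$, so $\psi(f)/\psi(g)=\psi(f')/\psi(g')$ in $\mathbb{P}$. That $\psi(e_{il})=z_{il}$ even extends to a homomorphism on $\mathbb{NP}[{\bf e}]$ in the first place rests on the algebraic independence of the elementary symmetric polynomials $e_{il}$ in \eqref{eq: idea of the construction}, i.e.\ on $\mathbb{NP}[{\bf e}]$ being free on ${\bf e}$; once this is granted, every step above is purely formal.
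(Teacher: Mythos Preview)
Your proof is correct and follows essentially the same approach as the paper: both observe that the composite $\mathbb{P}\hookrightarrow\mathbb{S}\xrightarrow{\psi}\mathbb{P}$ is the identity (hence $\psi$ is surjective) and then invoke the first isomorphism theorem for semifields. The paper compresses this into two sentences, while you spell out the congruence verification and the inverse identification explicitly; your closing remarks on the well-definedness of $\psi$ are reasonable side commentary but not part of the lemma itself.
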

\begin{proof}
The composition $\mathbb{P} \hookrightarrow \mathbb{S} \overset{\psi}{\to} \mathbb{P}$ is the identity map, where $\mathbb{P} \hookrightarrow \mathbb{S}$ is the canonical inclusion. Thus, $\psi$ is surjective. Then, by the fundamental theorem on homomorphisms, the claim holds.
\end{proof}
Now, we have the desired relation \eqref{eq: factorization} in $\mathbb{S}$.
\begin{lemma}\label{lem: factrization for y-seeds}
For any $y \in \mathbb{S}$ and $k=1,2,\dots,n$, the products $\prod_{l=1}^{r_k}(1 \bar{\oplus} s_{kl}y)$ and $\prod_{l=1}^{r_k}(1 \bar{\oplus} s_{kl}^{-1}y)$ belong to $\mathbb{S}$. Moreover, we have
\begin{equation}
\begin{aligned}
\prod_{l=1}^{r_k}(1 \bar{\oplus} s_{kl}y) \sim Z_k|_{\mathbb{S}_0}(y),
\quad
\prod_{l=1}^{r_k}(1 \bar{\oplus} s_{kl}^{-1}y) \sim Z_k^{\mathrm{rec}}|_{\mathbb{S}_0}(y).
\end{aligned}
\end{equation}
\end{lemma}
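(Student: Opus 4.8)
The plan is to exploit that, by construction \eqref{eq: definition of psi}, the relation $\sim$ is exactly the kernel relation of the homomorphism $\psi$; so once both sides are shown to lie in $\mathbb{S}$, the claimed equivalences reduce to the single equality of their $\psi$-images.

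First I would expand the product in $\overline{\mathbb{P}}$. Since $\bar{\oplus}$ is distributive, $\prod_{l=1}^{r_k}(1\bar{\oplus}s_{kl}y)=\baroplus_{l=0}^{r_k}e_{kl}y^{l}$, where $e_{kl}$ is the degree-$l$ elementary symmetric polynomial \eqref{eq: idea of the construction} and $e_{k0}=1$. This is just the generating identity for the $e_{kl}$, and being subtraction-free it holds verbatim in the semifield. To see the product lies in $\mathbb{S}$, I would write $y=f_y/g_y$ with $f_y,g_y\in\mathbb{NP}[{\bf e}]$ and $\psi(f_y),\psi(g_y)\neq0$, clear denominators to get $\baroplus_{l}e_{kl}y^{l}=\bigl(\baroplus_{l}e_{kl}f_y^{l}g_y^{r_k-l}\bigr)/g_y^{r_k}$, and check that the $\psi$-images of numerator and denominator are nonzero. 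The denominator gives $\psi(g_y)^{r_k}\neq0$; the numerator gives $\baroplus_{l}z_{kl}\psi(f_y)^{l}\psi(g_y)^{r_k-l}$, whose $l=0$ term equals $z_{k0}\psi(g_y)^{r_k}=\psi(g_y)^{r_k}\neq0$, so the whole $\oplus$-sum is nonzero because in a semifield any sum containing a nonzero summand is nonzero.

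With membership secured, applying $\psi$ and using $\psi(e_{kl})=z_{kl}$ together with distributivity yields $\psi\bigl(\prod_{l}(1\bar{\oplus}s_{kl}y)\bigr)=\baroplus_{l}z_{kl}\psi(y)^{l}$. On the other side, unwinding the definition of the specialization, $Z_k|_{\mathbb{S}_0}(y)=\baroplus_{l}z_{kl}y^{l}$ lies in $\mathbb{S}$ (each nonzero $z_{kl}$ is in $\mathbb{P}\subset\mathbb{S}$ and $\mathbb{S}$ is a subsemifield), and since $\psi$ fixes $\mathbb{P}$ its image is the same expression $\baroplus_{l}z_{kl}\psi(y)^{l}$. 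Equality of $\psi$-images gives the first equivalence by the definition of $\sim$.

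For the reciprocal polynomial the only extra ingredient is the identity that the degree-$l'$ elementary symmetric polynomial in the inverses $s_{kl}^{-1}$ equals $e_{k,r_k-l'}\,e_{kr_k}^{-1}$; this is again subtraction-free, obtained by multiplying through by $\prod_m s_{km}$ and matching each $l'$-subset with its complement. Thus $\prod_{l}(1\bar{\oplus}s_{kl}^{-1}y)=e_{kr_k}^{-1}\baroplus_{l'}e_{k,r_k-l'}y^{l'}$, and the same membership-and-$\psi$ computation applies, now crucially invoking $\psi(e_{kr_k})=z_{kr_k}=1$ from \eqref{eq: highest order unitness} so that the factor $e_{kr_k}^{-1}$ becomes harmless under $\psi$; the image then matches $\psi(Z_k^{\mathrm{rec}}|_{\mathbb{S}_0}(y))$, giving the second equivalence. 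The hard part will be the membership step: one has to verify the $\psi$-nonvanishing of the cleared numerators and denominators, which is precisely where the normalizations $z_{k0}=z_{kr_k}=1$ and the no-cancellation property of $\oplus$ in a semifield are indispensable, and where keeping the two additions $\oplus$ and $\bar{\oplus}$ carefully apart is the main bookkeeping hazard.
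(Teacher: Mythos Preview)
Your proof is correct and follows essentially the same route as the paper: expand the product via the elementary symmetric polynomials $e_{kl}$, verify membership in $\mathbb{S}$ by checking that the $\psi$-image of the cleared numerator is nonzero (using $z_{k0}=1$), and conclude the equivalence by equality of $\psi$-values. One small slip to fix: by definition $Z_k|_{\mathbb{S}_0}(y)=\baroplus_{l}\pi_{\mathbb{S}_0}(\hat z_{kl})\,y^{l}=\baroplus_{l}\hat z_{kl}\,y^{l}$ (with $\hat z_{kl}$, not $z_{kl}$, since $\bar\oplus$ agrees with $+$ on $\mathbb{NP}$), though your $\psi$-image computation is unaffected because $\psi(\hat z_{kl})=z_{kl}$; for the reciprocal the paper takes the equivalent shortcut of substituting $y\mapsto y^{-1}$ into the first relation and multiplying through by $y^{r_k}e_{kr_k}^{-1}\sim y^{r_k}$.
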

\begin{proof}
Let us prove the first relation. Since $\prod_{l=1}^{r_k}(1 \bar{\oplus} s_{kl}y)=1\bar{\oplus}\baroplus_{l=1}^{r_k}e_{kl}y^l$, we have
\begin{equation}\label{eq: expansion and apply psi in S}
\psi\left(\prod_{l=1}^{r_k}(1 \bar{\oplus} s_{kl}y)\right)=1\oplus \bigoplus_{l=1}^{r_k}z_{kl}\psi(y)^l.
\end{equation}
The right hand side has a term $1 \in \mathbb{P}$. So, it is nonzero. Thus, the product $\prod_{l=1}^{r_k}(1 \bar{\oplus} s_{kl}y)$ belongs to $\mathbb{S}$. The first relation also follows from \eqref{eq: expansion and apply psi in S}. The second relation is obtained from the first one by replacing $y$ with $y^{-1}$ and multiplying $y^{r_k}e_{kr_k}^{-1} \sim y^{r_k}$.
\end{proof}

\subsection{Realization of generalized $Y$-patterns}
From now on, we consider two kinds of patterns. The first one is the generalized cluster pattern ${\bf \Sigma}^{\mathrm{g}}=\{({\bf x}_t^{\mathrm{g}}, {\bf y}_{t}^{\mathrm{g}},{\bf Z}_t,B_t)\}_{t \in \mathbb{T}_n}$, where the initial seed $\Sigma_{t_0}^{\mathrm{g}}$ is given by $\Sigma$ in the previous section. The second one is a composite cluster pattern ${\bf \Sigma}^{\mathrm{c}}=\{({\bf x}_t^{\mathrm{c}},{\bf y}_{t}^{\mathrm{c}},\mathcal{B}_t)\}_{t \in \mathbb{T}_n}$ whose detail will be specified below.
\par
Here, we focus on the generalized $Y$-pattern ${\bf \Upsilon}^{\mathrm{g}}$ of ${\bf \Sigma}^{\mathrm{g}}$ with the initial generalized $Y$-seed $({\bf y}_{t_0}^{\mathrm{g}},{\bf Z}_{t_0},B_{t_0})$. We introduce the corresponding initial ordinary $Y$-seed $({\bf y}_{t_0}^{\mathrm{c}},\mathcal{B}_{t_0})$ of rank $\mathcal{N}$ with coefficients in $\overline{\mathbb{P}}$, where $\mathcal{B}_{t_0}$ is the enlargement of $B_{t_0}$ in \eqref{eq: definition of enlargements}, and the $y$-variables are given by
\begin{equation}\label{eq: definition of y seeds}
y_{il;t_0}^{\mathrm{c}}=s_{il}y_{i;t_0}^{\mathrm{g}} \quad ((i,l) \in \mathcal{D}).
\end{equation}
Then, we have the composite $Y$-pattern ${\bf \Upsilon}^{\mathrm{c}}$ with the initial $Y$-seed $({\bf y}_{t_0}^{\mathrm{c}},\mathcal{B}_{t_0})$.
\par
For each $i =1,\dots,n$ and $t \in \mathbb{T}_n$, we recursively define the sign $\sigma_{i;t} \in \{\pm 1\}$ as follows: Set $\sigma_{i;t_0}=1$ for any $i=1,\dots,n$. For any pair of $k$-adjacent vertices $t,t' \in \mathbb{T}_n$, let
\begin{equation}\label{eq: definition of sigma}
\sigma_{k;t'}=-\sigma_{k;t},
\quad
\sigma_{i;t'}=\sigma_{i;t}\quad (i \neq k).
\end{equation}
Note that we have $Z_{i;t}(u)=Z_{i;t_0}(u)$ if $\sigma_{i;t}=1$, and $Z_{i;t}(u)=Z_{i;t_0}^{\mathrm{rec}}(u)$ if $\sigma_{i;t}=-1$. Then, by \Cref{lem: factrization for y-seeds}, we have
\begin{equation}\label{eq: equivalence in P}
\prod_{l=1}^{r_k}(1 \bar{\oplus} s_{kl}^{\sigma_{k;t}}y) \sim Z_{k;t}|_{\mathbb{S}_0}(y) \quad (y \in \mathbb{S}).
\end{equation}
\par
Let $\mathbb{S}_1=\psi^{-1}(1)$, where $\psi$ is the one in \eqref{eq: definition of psi}. Then, the following fact holds.
\begin{lemma}[cf.~{\cite[Lem.~24]{RW25}}]\label{lem: y expression}
For any $(i,l) \in \mathcal{D}$ and $t \in \mathbb{T}_n$, we have
\begin{equation}\label{eq: ordinary y variables}
y_{il;t}^{\mathrm{c}}=s_{il}^{\sigma_{i;t}}y_{i;t}^{\mathrm{g}}\mathfrak{u}_{i;t},
\end{equation}
where $\mathfrak{u}_{i;t} \in \mathbb{S}_1$ is independent of $l=1,\dots,r_i$.
\end{lemma}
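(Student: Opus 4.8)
The plan is to prove \eqref{eq: ordinary y variables} by induction on the distance $d(t_0,t)$ in $\mathbb{T}_n$, simultaneously constructing the correction factors $\mathfrak{u}_{i;t}\in\mathbb{S}_1$. For the base case $t=t_0$, the initial condition \eqref{eq: definition of y seeds} gives $y_{il;t_0}^{\mathrm{c}}=s_{il}y_{i;t_0}^{\mathrm{g}}$, so \eqref{eq: ordinary y variables} holds with $\sigma_{i;t_0}=1$ and $\mathfrak{u}_{i;t_0}=1\in\mathbb{S}_1$. For the inductive step I fix a pair of $k$-adjacent vertices $t,t'$ with $d(t_0,t')=d(t_0,t)+1$, assume \eqref{eq: ordinary y variables} at $t$, and compute $y_{il;t'}^{\mathrm{c}}$ from the composite $y$-mutation \eqref{eq: composite y-mutation}, comparing it with the generalized $y$-mutation \eqref{eq: mutation of y}. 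Throughout I will use that $\mathbb{S}_1=\psi^{-1}(1)$ is a subgroup of the multiplicative group $\mathbb{S}$ (hence closed under products, inverses and integer powers), that $\psi$ restricts to the identity on $\mathbb{P}$, and that $\psi(e_{kr_k})=1$ by \eqref{eq: highest order unitness}, i.e.\ $e_{kr_k}=\prod_{m=1}^{r_k}s_{km}\in\mathbb{S}_1$.

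The case $i=k$ is immediate: \eqref{eq: composite y-mutation} gives $y_{kl;t'}^{\mathrm{c}}=(y_{kl;t}^{\mathrm{c}})^{-1}=s_{kl}^{-\sigma_{k;t}}(y_{k;t}^{\mathrm{g}})^{-1}\mathfrak{u}_{k;t}^{-1}$. Since $\sigma_{k;t'}=-\sigma_{k;t}$ by \eqref{eq: definition of sigma} and $(y_{k;t}^{\mathrm{g}})^{-1}=y_{k;t'}^{\mathrm{g}}$ by \eqref{eq: mutation of y}, this has the required form with $\mathfrak{u}_{k;t'}=\mathfrak{u}_{k;t}^{-1}\in\mathbb{S}_1$, which is independent of $l$.

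The case $i\neq k$ is the heart of the argument. Substituting the inductive hypothesis into \eqref{eq: composite y-mutation}, I first evaluate $\prod_{m=1}^{r_k}y_{km;t}^{\mathrm{c}}=e_{kr_k}^{\sigma_{k;t}}(y_{k;t}^{\mathrm{g}})^{r_k}\mathfrak{u}_{k;t}^{r_k}$, and then the crucial product $P:=\prod_{m=1}^{r_k}(1\bar{\oplus}\, y_{km;t}^{\mathrm{c}})=\prod_{m=1}^{r_k}(1\bar{\oplus}\, s_{km}^{\sigma_{k;t}}y_{k;t}^{\mathrm{g}}\mathfrak{u}_{k;t})$. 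Setting $y=y_{k;t}^{\mathrm{g}}\mathfrak{u}_{k;t}\in\mathbb{S}$ in the factorization equivalence \eqref{eq: equivalence in P} shows $P\in\mathbb{S}$ and $\psi(P)=\psi(Z_{k;t}|_{\mathbb{S}_0}(y))=Z_{k;t}|_{\mathbb{P}_0}(\psi(y))$. Because $\mathfrak{u}_{k;t}\in\mathbb{S}_1$ and $y_{k;t}^{\mathrm{g}}\in\mathbb{P}$, we have $\psi(y)=y_{k;t}^{\mathrm{g}}$, hence $\psi(P)=Z_{k;t}|_{\mathbb{P}_0}(y_{k;t}^{\mathrm{g}})$. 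Factoring out the generalized $y$-variable $y_{i;t'}^{\mathrm{g}}=y_{i;t}^{\mathrm{g}}(y_{k;t}^{\mathrm{g}})^{r_k[b_{ki}]_+}Z_{k;t}|_{\mathbb{P}_0}(y_{k;t}^{\mathrm{g}})^{-b_{ki}}$ together with $s_{il}^{\sigma_{i;t}}=s_{il}^{\sigma_{i;t'}}$, I define
\begin{equation*}
\mathfrak{u}_{i;t'}=\mathfrak{u}_{i;t}\,e_{kr_k}^{\sigma_{k;t}[b_{ki}]_+}\,\mathfrak{u}_{k;t}^{r_k[b_{ki}]_+}\left(\frac{P}{Z_{k;t}|_{\mathbb{P}_0}(y_{k;t}^{\mathrm{g}})}\right)^{-b_{ki}}.
\end{equation*}
A direct check shows $s_{il}^{\sigma_{i;t'}}y_{i;t'}^{\mathrm{g}}\mathfrak{u}_{i;t'}=y_{il;t'}^{\mathrm{c}}$, and every factor of $\mathfrak{u}_{i;t'}$ is manifestly independent of $l$.

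Finally I would verify $\mathfrak{u}_{i;t'}\in\mathbb{S}_1$: the factors $\mathfrak{u}_{i;t}$ and $\mathfrak{u}_{k;t}$ lie in $\mathbb{S}_1$ by induction, $e_{kr_k}\in\mathbb{S}_1$ by \eqref{eq: highest order unitness}, and the ratio $P/Z_{k;t}|_{\mathbb{P}_0}(y_{k;t}^{\mathrm{g}})$ lies in $\mathbb{S}_1$ precisely because $\psi(P)=Z_{k;t}|_{\mathbb{P}_0}(y_{k;t}^{\mathrm{g}})$; since $\mathbb{S}_1$ is a subgroup, the product is again in $\mathbb{S}_1$. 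The main subtlety — and the only real obstacle — is that the individual variables $s_{km}$ do not belong to $\mathbb{S}$, so $\psi$ cannot be applied to them one at a time; one must keep the whole symmetric product $P$ together and invoke the factorization \eqref{eq: equivalence in P}, whose $\psi$-image collapses exactly to $Z_{k;t}|_{\mathbb{P}_0}(y_{k;t}^{\mathrm{g}})$ because $\mathfrak{u}_{k;t}$ is invisible to $\psi$. This is the precise mechanism by which the $\mathbb{S}_1$-valued correction term makes the composite $y$-variables project onto the generalized ones.
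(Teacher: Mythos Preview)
Your proof is correct and follows essentially the same route as the paper's: induction on $t$, the $i=k$ case handled by inversion, and for $i\neq k$ the composite $y$-mutation is unpacked using the inductive hypothesis, with the factorization equivalence \eqref{eq: equivalence in P} and $e_{kr_k}\in\mathbb{S}_1$ supplying exactly the $\mathbb{S}_1$-factors needed. The only cosmetic difference is that the paper writes the denominator of the ratio as $Z_{k;t}|_{\mathbb{S}_0}(y_{k;t}^{\mathrm{g}})$ rather than your $Z_{k;t}|_{\mathbb{P}_0}(y_{k;t}^{\mathrm{g}})$; since these two elements have the same $\psi$-image, the resulting $\mathfrak{u}_{i;t'}$ differ only by an $\mathbb{S}_1$-factor, and both decompositions establish the claim.
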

\begin{proof}
When $t=t_0$, the claim follows from \eqref{eq: definition of y seeds}.
Suppose that the claim holds for some $t \in \mathbb{T}_n$. Let $t' \in \mathbb{T}_n$ be $k$-adjacent to $t$. We calculate $y_{il;t'}^{\mathrm{c}}$ by the formula \eqref{eq: composite y-mutation}. When $i=k$, $y_{kl;t'}^{\mathrm{c}}=s_{kl}^{-\sigma_{k;t}}(y_{k;t}^{\mathrm{g}})^{-1}\mathfrak{u}_{k;t}^{-1}$ also satisfies the same property. Assume $i \neq k$. Then, we have
\begin{equation}\label{eq: y expression}
\begin{aligned}
y_{il;t'}^{\mathrm{c}}&=s_{il}^{\sigma_{i;t}}y_{i;t}^{\mathrm{g}}(y_{k;t}^{\mathrm{g}})^{r_k[b_{ki;t}]_{+}}Z_{k;t}|_{\mathbb{S}_0}(y_{k;t}^{\mathrm{g}})^{-b_{ki;t}}
\\
&\qquad \times \mathfrak{u}_{i;t}\mathfrak{u}_{k;t}^{r_k[b_{ki;t}]_{+}}\left(\prod_{m=1}^{r_k}s_{km}\right)^{\sigma_{k;t}[b_{ki;t}]_{+}}\left(\frac{\prod_{m=1}^{r_k}(1 \bar{\oplus} s_{km}^{\sigma_{k;t}}y_{k;t}^{\mathrm{g}}\mathfrak{u}_{k;t})}{Z_{k;t}|_{\mathbb{S}_0}(y_{k;t}^{\mathrm{g}})}\right)^{-b_{ki;t}}.
\end{aligned}
\end{equation}
Then, every factor on the second row of \eqref{eq: y expression} belongs to $\mathbb{S}_1$ due to \eqref{eq: highest order unitness} and \eqref{eq: equivalence in P}. Thus, the claim holds.
\end{proof}
\begin{lemma}
For any $i=1,\dots,n$ and $t \in \mathbb{T}_n$, the product $\prod_{l=1}^{r_i}y_{il;t}^{\mathrm{c}}$ belongs to $\mathbb{S}$.
\end{lemma}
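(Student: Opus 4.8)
The plan is to reduce the statement immediately to \Cref{lem: y expression}, which supplies the only nontrivial input. By that lemma, for each $(i,l) \in \mathcal{D}$ and each $t \in \mathbb{T}_n$ we may write $y_{il;t}^{\mathrm{c}}=s_{il}^{\sigma_{i;t}}y_{i;t}^{\mathrm{g}}\mathfrak{u}_{i;t}$, where $\mathfrak{u}_{i;t} \in \mathbb{S}_1$ does not depend on $l$. First I would take the product over $l=1,\dots,r_i$ and separate the three types of factors. Since both $y_{i;t}^{\mathrm{g}}$ and $\mathfrak{u}_{i;t}$ are independent of $l$, they contribute $(y_{i;t}^{\mathrm{g}})^{r_i}$ and $\mathfrak{u}_{i;t}^{r_i}$, respectively, so that
\begin{equation}
\prod_{l=1}^{r_i}y_{il;t}^{\mathrm{c}}=\left(\prod_{l=1}^{r_i}s_{il}\right)^{\sigma_{i;t}}(y_{i;t}^{\mathrm{g}})^{r_i}\mathfrak{u}_{i;t}^{r_i}.
\end{equation}

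The key observation, and essentially the whole content of the lemma, is that the $l$-product of the formal variables $s_{il}$ collapses to the top elementary symmetric polynomial: $\prod_{l=1}^{r_i}s_{il}=e_{ir_i}$ by \eqref{eq: idea of the construction}. Although a single $s_{il}$ does not lie in $\mathbb{S}$ (it is not a ratio of elements of $\mathbb{NP}[{\bf e}]$ when $r_i>1$), the full product is the generator $e_{ir_i} \in \mathbb{NP}[{\bf e}]$, and by \eqref{eq: highest order unitness} we have $\psi(e_{ir_i})=1 \neq 0$; hence $e_{ir_i} \in \mathbb{S}$ by \eqref{eq: definition of S for y} (indeed $e_{ir_i} \in \mathbb{S}_1$). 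This is precisely the point at which the definition of $\mathbb{S}$ is tailored to make the statement hold, and it is the only step requiring any thought.

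It then remains to verify that each of the three factors belongs to $\mathbb{S}$ and to invoke closure. We have $e_{ir_i}^{\sigma_{i;t}} \in \mathbb{S}$ since $\mathbb{S}$ is a semifield and is thus closed under multiplicative inverses (when $\sigma_{i;t}=-1$, note $\psi(e_{ir_i}^{-1})=1\neq 0$ as well); $(y_{i;t}^{\mathrm{g}})^{r_i} \in \mathbb{S}$ because $y_{i;t}^{\mathrm{g}} \in \mathbb{P} \subset \mathbb{S}$ under the identification $p=p/1$; and $\mathfrak{u}_{i;t}^{r_i} \in \mathbb{S}$ because $\mathfrak{u}_{i;t} \in \mathbb{S}_1 \subset \mathbb{S}$ and $\psi(\mathfrak{u}_{i;t}^{r_i})=\psi(\mathfrak{u}_{i;t})^{r_i}=1$. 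Since $\mathbb{S}$ is closed under multiplication, the product of the three factors lies in $\mathbb{S}$, proving the claim. No genuine obstacle arises beyond recognizing the telescoping of the $s_{il}$ into $e_{ir_i}$; once that is seen, the argument is a direct bookkeeping of semifield membership.
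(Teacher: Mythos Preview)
Your proposal is correct and takes essentially the same approach as the paper. The paper's own proof is the one-line remark ``This follows from \eqref{eq: ordinary y variables}'', and you have simply unpacked that sentence: the product collapses to $e_{ir_i}^{\sigma_{i;t}}(y_{i;t}^{\mathrm{g}})^{r_i}\mathfrak{u}_{i;t}^{r_i}$, each factor visibly lies in $\mathbb{S}$, and $\mathbb{S}$ is closed under multiplication.
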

\begin{proof}
This follows from \eqref{eq: ordinary y variables}.
\end{proof}
As a result, ${\bf \Upsilon}^{\mathrm{g}}$ is realized in $\mathbb{S}/{\sim} \cong \mathbb{P}$ by ${\bf \Upsilon}^{\mathrm{c}}$ as follows.
\begin{theorem}\label{thm: realization of y-seeds}
In $\mathbb{S}/{\sim}$, we have
\begin{equation}
[y_{i;t}^\mathrm{g}]=\left[\prod_{l=1}^{r_i}y_{il;t}^{\mathrm{c}}\right]^{\frac{1}{r_i}},
\end{equation}
where the right hand side is the unique $r_i$th root of $\left[\prod_{l=1}^{r_i}y_{il;t}^{\mathrm{c}}\right]$ in $\mathbb{S}/{\sim}$.
\end{theorem}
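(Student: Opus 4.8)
The plan is to reduce everything to the explicit formula for the composite $y$-variables in \Cref{lem: y expression} and then to control the ambiguity of $r_i$th roots via the torsion-freeness of $\mathbb{P}$. The existence half of the statement is a direct computation; the genuine content is uniqueness.

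First I would substitute \eqref{eq: ordinary y variables} into the product. Since $y_{i;t}^{\mathrm{g}}$ and $\mathfrak{u}_{i;t}$ are independent of $l$, the factors reorganize as
\begin{equation}
\prod_{l=1}^{r_i}y_{il;t}^{\mathrm{c}}=\left(\prod_{l=1}^{r_i}s_{il}\right)^{\sigma_{i;t}}(y_{i;t}^{\mathrm{g}})^{r_i}\mathfrak{u}_{i;t}^{r_i}=e_{ir_i}^{\sigma_{i;t}}(y_{i;t}^{\mathrm{g}})^{r_i}\mathfrak{u}_{i;t}^{r_i}.
\end{equation}
Passing to the quotient $\mathbb{S}/{\sim}$ and using that $\sim$ is compatible with the semifield operations, the class of the product factors as $[e_{ir_i}]^{\sigma_{i;t}}[y_{i;t}^{\mathrm{g}}]^{r_i}[\mathfrak{u}_{i;t}]^{r_i}$. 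Now $\psi(e_{ir_i})=1$ by \eqref{eq: highest order unitness} and $\psi(\mathfrak{u}_{i;t})=1$ since $\mathfrak{u}_{i;t}\in\mathbb{S}_1=\psi^{-1}(1)$; hence $[e_{ir_i}]=[\mathfrak{u}_{i;t}]=1$ and the two outer factors disappear. This yields the key identity
\begin{equation}
\left[\prod_{l=1}^{r_i}y_{il;t}^{\mathrm{c}}\right]=[y_{i;t}^{\mathrm{g}}]^{r_i},
\end{equation}
so $[y_{i;t}^{\mathrm{g}}]$ is certainly \emph{an} $r_i$th root of the left-hand side.

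It then remains to prove that this root is unique, which is the only real point of the argument. By \Cref{lem: S tilde} we have $\mathbb{S}/{\sim}\cong\mathbb{P}$, so it suffices to show that the $r_i$th power map is injective on $\mathbb{P}$. If $a,b\in\mathbb{P}$ satisfy $a^{r_i}=b^{r_i}$, then $\xi=ab^{-1}\in\mathbb{P}$ satisfies $\xi^{r_i}=1$, and I would argue that $\xi=1$, i.e.\ that $\mathbb{P}$ is torsion-free as a multiplicative group. The hardest step (really the only nonroutine one) is justifying this torsion-freeness, and it follows from the fact that $\mathbb{ZP}$ is an integral domain \cite{FZ02}: a nontrivial torsion element $\xi$ of order $d\mid r_i$ would give the factorization $0=\xi^{d}-1=(\xi-1)(\xi^{d-1}+\cdots+\xi+1)$ in $\mathbb{ZP}$ with both factors nonzero, since $\xi\neq 1$ and the distinct group elements $1,\xi,\dots,\xi^{d-1}$ sum to a nonzero element of $\mathbb{ZP}$, contradicting the domain property. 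Combining this injectivity with the identity above shows that $[y_{i;t}^{\mathrm{g}}]$ is the unique $r_i$th root of $\bigl[\prod_{l=1}^{r_i}y_{il;t}^{\mathrm{c}}\bigr]$, which completes the proof.
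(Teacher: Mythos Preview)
Your proof is correct and follows essentially the same approach as the paper: both compute $\prod_{l=1}^{r_i}y_{il;t}^{\mathrm{c}}\sim (y_{i;t}^{\mathrm{g}})^{r_i}$ from \eqref{eq: ordinary y variables} and \eqref{eq: highest order unitness}, and then invoke the torsion-freeness of semifields (which the paper simply cites from \cite{FZ02}, while you spell out the standard zero-divisor argument in $\mathbb{ZP}$).
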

\begin{proof}
By \eqref{eq: highest order unitness} and \eqref{eq: ordinary y variables}, we have
\begin{equation}
\prod_{l=1}^{r_i}y_{il;t}^{\mathrm{c}} \sim (y_{i;t}^{\mathrm{g}})^{r_i}.
\end{equation}
Thus, $[y_{i;t}^\mathrm{g}]$ is an $r_i$th root of $[\prod_{l=1}^{r_i}y_{il;t}^{\mathrm{c}}]$.
Also, by \cite{FZ02}, every semifield is torsion-free as a multiplicative abelian group. Therefore, it is unique.
\end{proof}

\section{Realization of generalized cluster algebras}\label{sec: cluster algebras}
In this section, for a given GCA $\mathcal{A}^{\mathrm{g}}$, we construct a CA $\mathcal{A}$ whose subquotient $\mathcal{A}^{\mathrm{c}}/{\mathcal{I}}$ is isomorphic to $\mathcal{A}^{\mathrm{g}}$.
\subsection{Realization of generalized cluster patterns}
As in \Cref{sec: Y-pattern}, let ${\bf \Sigma}^{\mathrm{g}}=\{({\bf x}^{\mathrm{g}}_{t},{\bf y}^{\mathrm{g}}_{t},{\bf Z}_t,B_t)\}_{t \in \mathbb{T}_n}$ be a generalized cluster pattern with the initial seed $\Sigma^{\mathrm{g}}_{t_0}$. Let $\overline{\mathbb{P}}=\mathbb{QP}_{\mathrm{sf}}({\bf s})$, and let $\overline{\mathcal{F}}$ be the field of the rational functions in $\mathcal{N}=r_1+\cdots+r_n$ variables with coefficients in $\mathbb{Q}\overline{\mathbb{P}}$. 
We take an ordinary seed $\Sigma_{t_0}^{\mathrm{c}}=({\bf x}_{t_0}^{\mathrm{c}},{\bf y}_{t_0}^{\mathrm{c}},\mathcal{B}_{t_0})$ of rank $\mathcal{N}$ with the ambient field $\overline{\mathcal{F}}$ such that its $Y$-seed $({\bf y}_{t_0}^{\mathrm{c}},\mathcal{B}_{t_0})$ is given by \eqref{eq: definition of y seeds}. Then, we consider the composite cluster pattern ${\bf \Sigma}^{\mathrm{c}}=\{({\bf x}_{t}^{\mathrm{c}},{\bf y}_t^{\mathrm{c}},\mathcal{B}_t)\}_{t \in \mathbb{T}_{n}}$ with the initial seed $\Sigma_{t_0}^{\mathrm{c}}$, which is a subcollection of the ordinary cluster pattern ${\bf \Sigma}=\{({\bf x}_{t},{\bf y}_t,\mathcal{B}_t)\}_{t \in \mathbb{T}_{\mathcal{N}}}$ with the same initial seed $\Sigma_{t_0}=\Sigma_{t_0}^{\mathrm{c}}$.
\par
For any $t \in \mathbb{T}_n$, we set
\begin{equation}\label{eq: definition of X and Y}
X_{i;t}=\prod_{l=1}^{r_i}x_{il;t}^{\mathrm{c}},
\quad
\hat{Y}_{i;t}=y_{i;t}^{\mathrm{g}}\prod_{j=1}^{r_i}X_{j;t}^{b_{ji;t}}.
\end{equation}
Then, for any $(i,l) \in \mathcal{D}$ and $t \in \mathbb{T}_n$, $\hat{y}$-variables $\hat{y}_{il;t}^{\mathrm{c}}$ of $\Sigma_t^{\mathrm{c}}$ is given by
\begin{equation}\label{eq: y hat expression}
\hat{y}_{il;t}^{\mathrm{c}}=y_{il;t}^{\mathrm{c}}\prod_{(j,m) \in \mathcal{D}}(x_{jm;t}^{\mathrm{c}})^{\mathcal{B}_{jm, il;t}}
=y_{il;t}^{\mathrm{c}}\prod_{j=1}^{n}X_{j;t}^{b_{ji;t}}
\overset{\eqref{eq: ordinary y variables}}{=}s_{il}^{\sigma_{i;t}}\hat{Y}_{i;t}\mathfrak{u}_{i;t}.
\end{equation}
We write ${\bf X}=(X_{1;t_0},\dots,X_{n;t_0})$, for simplicity.
\par
In parallel to $\mathbb{S}$ in \eqref{eq: definition of S for y}, we introduce the $\mathbb{ZS}$-subalgebra $\mathcal{S} \subset \overline{\mathcal{F}}$ as follows. For each $(i,l) \in \mathcal{D}$, let
\begin{equation}\label{eq: elementary symmetric polynomial}
\hat{e}_{il}=\sum_{1 \leq m_1<m_2<\cdots<m_l \leq r_i} s_{im_1}s_{im_2}\cdots s_{im_l} \in \overline{\mathcal{F}}
\end{equation}
be the elementary symmetric polynomial in $s_{i1},\dots,s_{ir_i}$ of degree $l$ with respect to the addition $+$ in $\overline{\mathcal{F}}$. Let $\hat{\bf e}=\{\hat{e}_{il} \mid (i,l) \in \mathcal{D} \}$.
Let $\mathbb{QP}({\bf X})$ be the fraction field of $\mathbb{ZP}[{\bf X}]$. Then, the semifield homomorphism $\psi:\mathbb{S} \to \mathbb{P}$ in \eqref{eq: definition of psi} can be extended to a $\mathbb{ZP}$-algebra homomorphism $\hat{\psi}:\mathbb{ZS}[\hat{\bf e}, {\bf X}] \to \mathbb{ZP}[{\bf X}]$ by $X_{i;t_0} \mapsto X_{i;t_0}$ and $\hat{e}_{il} \mapsto \hat{z}_{il}$. Let
\begin{equation}
\mathcal{S}=\left\{\left. \frac{f}{g} \in \overline{\mathcal{F}}\ \right|\ f,g \in \mathbb{ZS}[\hat{\bf e}, {\bf X}],\ \hat{\psi}(g) \neq 0 \right\}.
\end{equation}
This is a $\mathbb{ZP}$-subalgebra of $\overline{\mathcal{F}}$. One can extend the above $\hat{\psi}$ to a $\mathbb{ZP}$-algebra homomorphism $\hat{\psi}: \mathcal{S} \to \mathbb{QP}({\bf X})$ by $\psi(f/g)=\psi(f)/\psi(g)$.
Then, the following fact holds.
\begin{lemma}\label{lem: field isomorphism}
The quotient $\mathcal{S}/{\ker \hat{\psi}}$ is a field. Moreover, the $\mathbb{ZP}$-algebra homomorphism $\mathbb{QP}({\bf X}) \to \mathcal{S}/{\ker \hat{\psi}}$ defined by $f \mapsto [f]$ is a field isomorphism.
\end{lemma}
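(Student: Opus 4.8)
The plan is to mirror the argument of \Cref{lem: S tilde} at the level of rings rather than semifields, replacing the equivalence relation $\sim$ there with the kernel $\ker \hat{\psi}$. The whole proof rests on exhibiting a section of $\hat{\psi}$, namely the canonical inclusion $\iota : \mathbb{QP}({\bf X}) \hookrightarrow \mathcal{S}$, and then invoking the fundamental theorem on homomorphisms.

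First I would verify that $\iota$ is well defined, i.e.\ that every $f/g$ with $f,g \in \mathbb{ZP}[{\bf X}]$ and $g \neq 0$ actually lies in $\mathcal{S}$. Since $\mathbb{ZP} \subset \mathbb{ZS}$, we have $\mathbb{ZP}[{\bf X}] \subset \mathbb{ZS}[\hat{\bf e},{\bf X}]$, so $f$ and $g$ are admissible as numerator and denominator. The point requiring care is the denominator condition $\hat{\psi}(g) \neq 0$: because $\hat{\psi}$ is a $\mathbb{ZP}$-algebra homomorphism fixing $\mathbb{ZP}$ pointwise and sending $X_{i;t_0} \mapsto X_{i;t_0}$, its restriction to $\mathbb{ZP}[{\bf X}]$ is the identity, whence $\hat{\psi}(g) = g \neq 0$. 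This simultaneously shows that $\iota$ lands in $\mathcal{S}$ and that $\hat{\psi} \circ \iota = \mathrm{id}_{\mathbb{QP}({\bf X})}$.

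From $\hat{\psi} \circ \iota = \mathrm{id}$ it follows immediately that $\hat{\psi} : \mathcal{S} \to \mathbb{QP}({\bf X})$ is surjective. By the fundamental theorem on homomorphisms, $\hat{\psi}$ induces a $\mathbb{ZP}$-algebra isomorphism $\overline{\psi} : \mathcal{S}/\ker \hat{\psi} \xrightarrow{\sim} \mathbb{QP}({\bf X})$; since the target $\mathbb{QP}({\bf X})$ is a field, so is the quotient $\mathcal{S}/\ker \hat{\psi}$. Finally, the stated map $\mathbb{QP}({\bf X}) \to \mathcal{S}/\ker \hat{\psi}$, $f \mapsto [f]$, is precisely $f \mapsto [\iota(f)]$, and composing it with $\overline{\psi}$ yields $f \mapsto \hat{\psi}(\iota(f)) = f$; hence it is a two-sided inverse of $\overline{\psi}$ and therefore a field isomorphism.

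The argument is essentially formal, so there is no substantial obstacle; the only step that genuinely needs attention is the verification that $\hat{\psi}$ restricts to the identity on $\mathbb{QP}({\bf X})$, which both guarantees that $\iota$ is well defined and provides the section driving the whole proof. One should also record in passing that ${\bf X} = (X_{1;t_0},\dots,X_{n;t_0})$ are algebraically independent over $\mathbb{QP}$, being products over the disjoint blocks of the ambient variables $x_{il;t_0}^{\mathrm{c}}$, so that $\mathbb{QP}({\bf X})$ is genuinely a rational function field embedded in $\overline{\mathcal{F}}$ and $\iota$ is injective.
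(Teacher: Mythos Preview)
Your proposal is correct and follows essentially the same approach as the paper's own proof: both exhibit the canonical inclusion $\mathbb{QP}(\mathbf{X}) \hookrightarrow \mathcal{S}$ as a section of $\hat{\psi}$, deduce surjectivity, and invoke the fundamental theorem on homomorphisms. You simply spell out in more detail the verification that the inclusion is well defined (the denominator condition $\hat{\psi}(g)=g\neq 0$) and the algebraic independence of $\mathbf{X}$, points the paper leaves implicit.
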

\begin{proof}
The composition $\mathbb{QP}({\bf X}) \hookrightarrow \mathcal{S} \overset{\hat{\psi}}{\to} \mathbb{QP}({\bf X})$ is the identity map, where $\mathbb{QP}({\bf X}) \hookrightarrow \mathcal{S}$ is the canonical inclusion. Thus, $\hat{\psi}$ is surjective. Then, by the fundamental theorem on homomorphisms, the claim holds.
\end{proof}
Let $\equiv$ denote the equivalence in $\mathcal{S}$ modulo the ideal $\ker \hat{\psi}$. Now, we have an analogous relation \eqref{eq: idea of the construction} in $\mathcal{S}$.
\begin{lemma}[cf.~{\cite[Lem.~29]{RW25}}]\label{lem: factrization for +}
For any $u \in \mathcal{S}$ and $k=1,\dots,n$, the products $\prod_{l=1}^{r_k}(1 + s_{kl}u)$ and $\prod_{l=1}^{r_k}(1 + s_{kl}^{-1}u)$ belong to $\mathcal{S}$. Moreover, we have
\begin{equation}\label{eq: equivalence in F}
\prod_{l=1}^{r_k}(1 + s_{kl}^{\sigma_{k;t}}u) \equiv Z_{k;t}(u).
\end{equation}
\end{lemma}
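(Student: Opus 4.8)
The plan is to mirror the proof of the semifield statement \Cref{lem: factrization for y-seeds}, replacing the semifield addition $\bar\oplus$ by the ring addition $+$, the polynomials $e_{kl}$ by the polynomials $\hat e_{kl}$, and the semifield homomorphism $\psi$ by the ring homomorphism $\hat\psi$. The two equivalences will then be read off directly from the defining rule $\hat\psi(\hat e_{kl})=\hat z_{kl}$, together with the monic normalization $\hat z_{kr_k}=1$.

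First I would expand the positive product. Since the $\hat e_{kl}$ in \eqref{eq: elementary symmetric polynomial} are the elementary symmetric polynomials in $s_{k1},\dots,s_{kr_k}$ with respect to $+$, we have, with $\hat e_{k0}=1$,
\[
\prod_{l=1}^{r_k}(1+s_{kl}u)=\sum_{l=0}^{r_k}\hat e_{kl}u^l.
\]
Writing a general $u\in\mathcal S$ as $u=f/g$ with $f,g\in\mathbb{ZS}[\hat{\bf e},{\bf X}]$ and $\hat\psi(g)\neq 0$, this product equals $\bigl(\sum_{l=0}^{r_k}\hat e_{kl}f^{l}g^{r_k-l}\bigr)/g^{r_k}$; the numerator lies in $\mathbb{ZS}[\hat{\bf e},{\bf X}]$ and $\hat\psi(g^{r_k})=\hat\psi(g)^{r_k}\neq 0$ because $\mathbb{QP}({\bf X})$ is a field, so the product belongs to $\mathcal S$. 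Applying $\hat\psi$ and using $\hat\psi(\hat e_{kl})=\hat z_{kl}$ gives $\hat\psi\bigl(\prod_{l=1}^{r_k}(1+s_{kl}u)\bigr)=\sum_{l=0}^{r_k}\hat z_{kl}\hat\psi(u)^l=\hat\psi(Z_{k;t_0}(u))$, which is exactly the equivalence $\prod_{l=1}^{r_k}(1+s_{kl}u)\equiv Z_{k;t_0}(u)$; this settles the case $\sigma_{k;t}=1$.

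For the inverse product I would use the reciprocal identity $1+s_{kl}^{-1}u=s_{kl}^{-1}(u+s_{kl})$, which yields
\[
\prod_{l=1}^{r_k}(1+s_{kl}^{-1}u)=\hat e_{k,r_k}^{-1}\prod_{l=1}^{r_k}(u+s_{kl})=\hat e_{k,r_k}^{-1}\sum_{l=0}^{r_k}\hat e_{k,r_k-l}\,u^l.
\]
Clearing $u=f/g$ as before, this is a fraction with numerator in $\mathbb{ZS}[\hat{\bf e},{\bf X}]$ and denominator $\hat e_{k,r_k}g^{r_k}$; the crucial point here is that $\hat\psi(\hat e_{k,r_k})=\hat z_{k,r_k}=1$, so $\hat\psi$ of the denominator equals $\hat\psi(g)^{r_k}\neq 0$ and the product again lies in $\mathcal S$. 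Applying $\hat\psi$ and using $\hat\psi(\hat e_{k,r_k})=1$ once more gives $\sum_{l=0}^{r_k}\hat z_{k,r_k-l}\hat\psi(u)^l=\hat\psi(Z_{k;t_0}^{\mathrm{rec}}(u))$, i.e.\ $\prod_{l=1}^{r_k}(1+s_{kl}^{-1}u)\equiv Z_{k;t_0}^{\mathrm{rec}}(u)$, which is the case $\sigma_{k;t}=-1$. Finally I would combine the two cases through the observation recorded just before \eqref{eq: equivalence in P}, namely $Z_{k;t}=Z_{k;t_0}$ when $\sigma_{k;t}=1$ and $Z_{k;t}=Z_{k;t_0}^{\mathrm{rec}}$ when $\sigma_{k;t}=-1$, which gives \eqref{eq: equivalence in F}.

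I expect no serious obstacle, since once the product is expanded via the elementary symmetric polynomials $\hat e_{kl}$ everything reduces to evaluating the ring homomorphism $\hat\psi$ on these coefficients. The only point requiring genuine care is membership in $\mathcal S$ in the inverse case, where the denominator acquires the extra factor $\hat e_{k,r_k}$; this is harmless precisely because the exchange polynomials $Z_k$ are monic with constant term $1$, forcing $\hat\psi(\hat e_{k,r_k})=\hat z_{k,r_k}=1$. One should also keep in mind the distinction between the generators $\hat e_{kl}$ (formed with $+$) used here and the $e_{kl}$ (formed with $\bar\oplus$) of \Cref{sec: Y-pattern}, so that the computation stays inside $\mathcal S$ rather than its semifield counterpart $\mathbb S$.
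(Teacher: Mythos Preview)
Your proposal is correct and follows essentially the same approach as the paper, which simply states that the lemma is proved ``in the same way as \Cref{lem: factrization for y-seeds} and \eqref{eq: equivalence in P}.'' Your treatment of the inverse case---expanding $\prod_{l}(1+s_{kl}^{-1}u)$ directly via $\hat e_{k,r_k}^{-1}\sum_l \hat e_{k,r_k-l}u^l$ rather than substituting $u\mapsto u^{-1}$ as in \Cref{lem: factrization for y-seeds}---is actually the cleaner route here, since $u\in\mathcal S$ need not be invertible, whereas in the semifield setting $y\in\mathbb S$ always is.
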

\begin{proof}
They are proved in the same way as \Cref{lem: factrization for y-seeds} and \eqref{eq: equivalence in P}.
\end{proof}

\begin{lemma}\label{lem: X in S}
Every $X_{i;t}$ belongs to $\mathcal{S}$, and it holds that $\hat{\psi}(X_{i;t}) \neq 0$.
\end{lemma}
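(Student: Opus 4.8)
The claim in \Cref{lem: X in S} is that each composite cluster variable product $X_{i;t}=\prod_{l=1}^{r_i}x_{il;t}^{\mathrm{c}}$ lies in the subalgebra $\mathcal{S}$ and has nonzero image under $\hat{\psi}$. The natural strategy is induction on the distance from the initial vertex $t_0$ in $\mathbb{T}_n$, exactly mirroring the proof of \Cref{lem: y expression} for the $y$-variables.

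\begin{proof}
We proceed by induction on the distance between $t$ and $t_0$ in $\mathbb{T}_n$.

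When $t=t_0$, by definition $X_{i;t_0}\in\{X_{1;t_0},\dots,X_{n;t_0}\}={\bf X}$, which is one of the chosen generators of $\mathbb{ZS}[\hat{\bf e},{\bf X}]$; hence $X_{i;t_0}\in\mathcal{S}$, and $\hat{\psi}(X_{i;t_0})=X_{i;t_0}\neq 0$ in $\mathbb{ZP}[{\bf X}]$.

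Suppose the claim holds at some $t\in\mathbb{T}_n$, and let $t'$ be $k$-adjacent to $t$. For $i\neq k$ the composite $x$-mutation \eqref{eq: composite x-mutation} fixes $x_{il;t}^{\mathrm{c}}$, so $X_{i;t'}=X_{i;t}$ and the claim is inherited from $t$. For $i=k$, we multiply the formula \eqref{eq: composite x-mutation} over $l=1,\dots,r_k$. The factor $x_{kl;t}^{-1}$ produces $X_{k;t}^{-1}$, and the monomial $\prod_{(j,m)}x_{jm;t}^{[-b_{jk;t}]_{+}}$ produces $\prod_{j=1}^{n}X_{j;t}^{r_k[-b_{jk;t}]_{+}}$. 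The essential step is the product of the exchange factors
\begin{equation*}
\prod_{l=1}^{r_k}\frac{1+\hat{y}_{kl;t}^{\mathrm{c}}}{1\oplus y_{kl;t}^{\mathrm{c}}}.
\end{equation*}
Using the expression \eqref{eq: y hat expression}, $\hat{y}_{kl;t}^{\mathrm{c}}=s_{kl}^{\sigma_{k;t}}\hat{Y}_{k;t}\mathfrak{u}_{k;t}$, so the numerator becomes $\prod_{l=1}^{r_k}(1+s_{kl}^{\sigma_{k;t}}\hat{Y}_{k;t}\mathfrak{u}_{k;t})$. By \Cref{lem: factrization for +} (applied with $u=\hat{Y}_{k;t}\mathfrak{u}_{k;t}$, which first must be verified to lie in $\mathcal{S}$), this product lies in $\mathcal{S}$, and the denominator $\prod_{l=1}^{r_k}(1\oplus y_{kl;t}^{\mathrm{c}})$ lies in $\mathbb{S}\subset\mathcal{S}$ by \Cref{lem: factrization for y-seeds}. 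By the inductive hypothesis, the factors $X_{j;t}^{\pm 1}$ all lie in $\mathcal{S}$ with nonzero $\hat{\psi}$-image, so $X_{k;t'}\in\mathcal{S}$. Finally, applying $\hat{\psi}$ and using the multiplicativity of $\hat{\psi}$ together with the specialization \eqref{eq: equivalence in F}, $\hat{\psi}(1+s_{kl}^{\sigma_{k;t}}\hat{Y}_{k;t}\mathfrak{u}_{k;t})$ matches the value of the exchange polynomial $Z_{k;t}$ evaluated at $\hat{\psi}(\hat{Y}_{k;t}\mathfrak{u}_{k;t})$, whose constant term $1$ guarantees nonvanishing; hence $\hat{\psi}(X_{k;t'})\neq 0$.
\end{proof}

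The step I expect to be the main obstacle is verifying that the argument $u=\hat{Y}_{k;t}\mathfrak{u}_{k;t}$ genuinely lies in $\mathcal{S}$ so that \Cref{lem: factrization for +} applies, since $\hat{Y}_{k;t}$ is a rational monomial in the $X_{j;t}$ whose $\hat{\psi}$-image involves the product ${\bf X}$ and the $y$-variables; one must track both membership in the subalgebra and the nonvanishing of $\hat{\psi}$ simultaneously through the induction, exactly as the combination of \Cref{lem: y expression} and \Cref{lem: factrization for y-seeds} was needed for the $y$-pattern. The bookkeeping of the $\mathfrak{u}_{k;t}\in\mathbb{S}_1$ factors, which specialize to $1$ under $\psi$, is what ensures the exchange factors specialize correctly to values of $Z_{k;t}$ rather than to some shifted polynomial, and this is the key consistency that makes the nonvanishing claim go through.
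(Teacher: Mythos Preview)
Your proof is correct and follows essentially the same approach as the paper: induction on $t$, derivation of the product formula for $X_{k;t'}$ via \eqref{eq: composite x-mutation} and \eqref{eq: y hat expression}, and invocation of Lemmas~\ref{lem: factrization for y-seeds} and~\ref{lem: factrization for +} to show the exchange factor lies in $\mathcal{S}$ with nonzero $\hat\psi$-image. One small expository point: $\hat\psi$ is only defined on $\mathcal{S}$, so it should be applied to the full product $\prod_{l=1}^{r_k}(1+s_{kl}^{\sigma_{k;t}}\hat Y_{k;t}\mathfrak{u}_{k;t})$ rather than to an individual factor, but your intent is clear and matches the paper's equation~\eqref{eq: mutation for X}.
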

\begin{proof}
We prove the claim by the induction on $t \in \mathbb{T}_n$. For $t=t_0$, the claim holds by definition. Suppose that the claim holds for some $t \in \mathbb{T}_n$, and let $t' \in \mathbb{T}_n$ be $k$-adjacent to $t$. By \eqref{eq: composite x-mutation}, \eqref{eq: ordinary y variables}, and \eqref{eq: y hat expression}, we have
\begin{equation}\label{eq: mutation for X}
X_{k;t'}=\frac{1}{X_{k;t}}\left(\prod_{j=1}^{n}X_{j;t}^{[-b_{jk;t}]_{+}}\right)^{r_k}\frac{\prod_{l=1}^{r_k}(1 + s_{kl}^{\sigma_{k;t}}\hat{Y}_{k;t}\mathfrak{u}_{k;t})}{\prod_{l=1}^{r_k}(1 \bar{\oplus} s_{kl}^{\sigma_{k;t}}y_{k;t}^{\mathrm{g}}\mathfrak{u}_{k;t})}.
\end{equation}
By the assumption, the first and the second factors on the right hand side satisfy the claim. Also, by Lemmas~\ref{lem: factrization for y-seeds} and \ref{lem: factrization for +}, the third factor belongs to $\mathcal{S}$, and the image of it by $\hat{\psi}$ is nonzero.
\end{proof}
Thanks to \Cref{lem: field isomorphism}, we have the field isomorphism $\Phi: \mathcal{F}=\mathbb{QP}({\bf x}_{t_0}^{\mathrm{g}}) \to \mathcal{S}/{\ker} \hat{\psi}$ by
\begin{equation}
\Phi(x_{i;t_0}^{\mathrm{g}})=[X_{i;t_0}],
\quad
\Phi(p)=[p] \quad (p \in \mathbb{P}).
\end{equation}
Now, the generalized cluster pattern ${\bf \Sigma}^{\mathrm{g}}$ is realized by the composite cluster pattern ${\bf \Sigma}^{\mathrm{c}}$ as follows.
\begin{theorem}[{cf.~\cite[Thm.~30]{RW25}}]\label{thm: main theorem for patterns}
By the above isomorphism $\Phi:\mathcal{F} \to \mathcal{S}/\ker \hat{\psi}$, we have
\begin{equation}\label{eq: isomorphism between the cluster pattern}
y_{i;t}^{\mathrm{g}} \mapsto \left[\prod_{l=1}^{r_i}y_{il;t}^{\mathrm{c}}\right]^{\frac{1}{r_i}},
\quad
x_{i;t}^{\mathrm{g}} \mapsto [X_{i;t}].
\end{equation}
\end{theorem}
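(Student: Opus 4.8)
The plan is to treat the two assertions in \eqref{eq: isomorphism between the cluster pattern} separately. The $y$-assertion is essentially a restatement of \Cref{thm: realization of y-seeds}. Since $\hat{\psi}$ extends $\psi$, the equivalence $\equiv$ on $\mathcal{S}$ restricts to $\sim$ on $\mathbb{S}$, so the quotient map $\mathbb{S} \to \mathbb{S}/{\sim}\cong\mathbb{P}$ is compatible with $\mathcal{S}\to\mathcal{S}/{\ker\hat{\psi}}$; moreover $\Phi|_{\mathbb{P}}$ coincides with the induced inclusion $\mathbb{P}\cong\mathbb{S}/{\sim}\hookrightarrow\mathcal{S}/{\ker\hat{\psi}}$. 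Hence $\Phi(y_{i;t}^{\mathrm{g}})=[y_{i;t}^{\mathrm{g}}]$, and the relation $\prod_{l=1}^{r_i}y_{il;t}^{\mathrm{c}}\sim(y_{i;t}^{\mathrm{g}})^{r_i}$ obtained in the proof of \Cref{thm: realization of y-seeds} yields the claimed $r_i$th-root formula directly, the root being the one in the embedded sub-semifield $\mathbb{P}$.

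For the $x$-assertion I would argue by induction on $t\in\mathbb{T}_n$, the base case $t=t_0$ being the definition of $\Phi$. For the inductive step, let $t'$ be $k$-adjacent to $t$ and apply the field homomorphism $\Phi$ to the generalized exchange relation \eqref{eq: mutation of x} for $x_{k;t'}^{\mathrm{g}}$. Using the inductive hypothesis $\Phi(x_{j;t}^{\mathrm{g}})=[X_{j;t}]$ together with $\Phi(p)=[p]$ for $p\in\mathbb{P}$, one computes $\Phi(\hat{y}_{k;t}^{\mathrm{g}})=[\hat{Y}_{k;t}]$ from \eqref{eq: definition of X and Y}, so that the monomial prefactor of $\Phi(x_{k;t'}^{\mathrm{g}})$ already matches that of $X_{k;t'}$ in \eqref{eq: mutation for X}. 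It then remains to identify the numerator $\Phi(Z_{k;t}(\hat{y}_{k;t}^{\mathrm{g}}))$ and the denominator $\Phi(Z_{k;t}|_{\mathbb{P}_0}(y_{k;t}^{\mathrm{g}}))$ with the classes of the corresponding factors of $X_{k;t'}$.

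The crucial observation is that $\mathfrak{u}_{k;t}\in\mathbb{S}_1=\psi^{-1}(1)$, so $\hat{\psi}(\mathfrak{u}_{k;t})=1$ and $\mathfrak{u}_{k;t}\equiv 1$; thus every occurrence of $\mathfrak{u}_{k;t}$ collapses in the quotient. For the numerator, the additive factorization \eqref{eq: equivalence in F} with $u=\hat{Y}_{k;t}\mathfrak{u}_{k;t}\in\mathcal{S}$ (legitimate by \Cref{lem: X in S}) gives $\prod_{l=1}^{r_k}(1+s_{kl}^{\sigma_{k;t}}\hat{Y}_{k;t}\mathfrak{u}_{k;t})\equiv Z_{k;t}(\hat{Y}_{k;t}\mathfrak{u}_{k;t})\equiv Z_{k;t}(\hat{Y}_{k;t})$, whose class is exactly $\Phi(Z_{k;t}(\hat{y}_{k;t}^{\mathrm{g}}))$. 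For the denominator, the semifield factorization \eqref{eq: equivalence in P} with $y=y_{k;t}^{\mathrm{g}}\mathfrak{u}_{k;t}\in\mathbb{S}$, followed by $\psi$ and the identity $\psi(\mathfrak{u}_{k;t})=1$, gives $\prod_{l=1}^{r_k}(1\bar{\oplus}s_{kl}^{\sigma_{k;t}}y_{k;t}^{\mathrm{g}}\mathfrak{u}_{k;t})\sim Z_{k;t}|_{\mathbb{P}_0}(y_{k;t}^{\mathrm{g}})$, whose class is $\Phi(Z_{k;t}|_{\mathbb{P}_0}(y_{k;t}^{\mathrm{g}}))$. Substituting both into \eqref{eq: mutation for X} then yields $\Phi(x_{k;t'}^{\mathrm{g}})=[X_{k;t'}]$, closing the induction.

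I expect the main difficulty to be bookkeeping rather than conceptual: one must carefully separate the additive structure $+$ of $\mathcal{S}$, which governs the numerator (the $x$-exchange polynomial), from the semifield addition $\bar{\oplus}$ of $\mathbb{S}$, which governs the denominator (the coefficient specialization), and check that the correction factors $\mathfrak{u}_{k;t}$ become trivial modulo $\equiv$. The compatibility of $\hat{\psi}$ with $\psi$, which makes $\equiv$ restrict to $\sim$ on $\mathbb{S}$, is precisely what lets the two factorizations \eqref{eq: equivalence in F} and \eqref{eq: equivalence in P} proceed in parallel; verifying this compatibility is the one point that deserves explicit care.
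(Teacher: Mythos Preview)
Your proposal is correct and follows essentially the same approach as the paper: the $y$-part is deferred to \Cref{thm: realization of y-seeds}, and the $x$-part is an induction on $t\in\mathbb{T}_n$ in which one computes $\Phi(\hat{y}_{k;t}^{\mathrm{g}})=[\hat{Y}_{k;t}]$ from \eqref{eq: definition of X and Y} and then applies the two factorizations \eqref{eq: equivalence in P} and \eqref{eq: equivalence in F} to the expression \eqref{eq: mutation for X} for $X_{k;t'}$. The paper's proof is terser, leaving implicit the points you spell out (that $\mathfrak{u}_{k;t}\in\mathbb{S}_1$ forces $\mathfrak{u}_{k;t}\equiv 1$, and that $\equiv$ restricts to $\sim$ on $\mathbb{S}$), but the logical route is identical.
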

\begin{proof}
For the $y$-variables, we have already shown the claim in \Cref{thm: realization of y-seeds}. 
For the $x$-variables, if the cliam holds for some $t \in \mathbb{T}_n$, by \eqref{eq: definition of X and Y}, we have $\Phi(\hat{y}_{i;t}^{\mathrm{g}})=[\hat{Y}_{i;t}]$. Then, by applying \eqref{eq: equivalence in P} and \eqref{eq: equivalence in F} to \eqref{eq: mutation for X}, we obtain the claim by the induction on $t \in \mathbb{T}_n$.
\end{proof}

\subsection{Realization of GCAs}
Let $\mathcal{A}^{\mathrm{g}}=\mathcal{A}({\bf \Sigma}^{\mathrm{g}})$ be the GCA of ${\bf \Sigma}^{\mathrm{g}}$, and let $\mathcal{A}=\mathcal{A}({\bf \Sigma})$ be the CA of ${\bf \Sigma}$.
Consider the following $\mathbb{ZS}$-subalgebra of $\mathcal{A}$:
\begin{equation}
\mathcal{A}^{\mathrm{c}}=\mathbb{ZS}[X_{i;t} \mid i=1,\dots,n,\ t \in \mathbb{T}_n] \subset \mathcal{A}.
\end{equation}
By \Cref{lem: X in S}, we also have $\mathcal{A}^{\mathrm{c}} \subset \mathcal{S}$. Let $\mathcal{I}=\ker\hat{\psi} \cap \mathcal{A}^{\mathrm{c}}$, and consider its quotient $\mathcal{A}^{\mathrm{c}}/{\mathcal{I}}$.
\par
We set $\tilde{\mathbb{S}}=\mathbb{S}/{\sim}$. All relevant algebras and their relations are summarized in the following diagram.
\begin{align}\label{eq: diagram}
\begin{tikzpicture}
\node at (0.3,0) {$\overline{\mathcal{F}}=\mathbb{Q} \overline{ \mathbb{P}} (\mathbf{x}_{t_0})$};
\node at (-1.5,-1) {$\mathcal{S}$};
\node at (3,-1) {$\mathcal{A}=\mathbb{Z} \overline{ \mathbb{P}} [x_{i;t} \vert i, t]$};
\node at (-6,-2) {$\mathcal{F} =\mathbb{Q}\mathbb{P}(\mathbf{x}_{t_0}^{\mathrm{g}})$};
\node at (-3,-2) {$\mathcal{S}/\mathrm{Ker}\, \hat\psi$};
\node at (0,-2) {$\mathcal{A}^{\mathrm{c}}+\mathrm{Ker}\, \hat\psi$};
\node at (3,-2.05) {$\mathcal{A}^{\mathrm{c}}=\mathbb{Z}  \mathbb{S} [X_{i;t} \vert i, t]$};
\node at (-6,-3) {$\mathcal{A}^{\mathrm{g}}=\mathbb{Z} \mathbb{P} [x_{i;t}^{\mathrm{g}} \vert i, t]$};
\node at (-1.5,-3) {$\mathbb{Z}  \tilde{\mathbb{S}} [[X_{i;t}] \vert i, t]$};
\node at (3,-3) {$\mathcal{A}^{\mathrm{c}}/\mathcal{I}$};
\node at (-4.25,-2) {$\simeq$};
\node at (-3.5,-3) {$\simeq$};
\node at (1.25,-3) {$\simeq$};
\node [rotate=90] at (-6,-2.5) {$\subset$};
\node [rotate=90] at (3,-1.5) {$\subset$};
\node [rotate=135] at (1.5,-0.5) {$\subset$};
\node [rotate=45] at (-1,-0.5) {$\subset$};
\node [rotate=135] at (-1,-1.5) {$\subset$};
\node [rotate=135] at (-2.5,-2.5) {$\subset$};
\node  at (1.3,-2.02) {$\supset$};
\draw [->] (3,-2.4) -- (3,-2.7);
\draw [->] (-1.9,-1.3) -- (-2.2,-1.6);
\draw [->] (-0.3,-2.4) -- (-0.6,-2.7);
\end{tikzpicture}
\end{align}
The following is the main result, which is parallel to the result of \cite{RW25}.
\begin{theorem}[cf.~{\cite[Thm.~1]{RW25}}]\label{thm: main theorem for cluster algebras}
The $\mathbb{ZP}$-algebra homomorphism $\Psi:\mathcal{A}^{\mathrm{g}} \to \mathcal{A}^{\mathrm{c}}/\mathcal{I}$ defined by $x_{i;t}^{\mathrm{g}} \mapsto [X_{i;t}]$ and $p \mapsto [p]$ ($p \in \mathbb{P}$) is an isomorphism. Therefore, $\mathcal{A}^{\mathrm{g}}$ is isomorphic to a subquotient $\mathcal{A}^{\mathrm{c}}/\mathcal{I}$ of $\mathcal{A}$.
\end{theorem}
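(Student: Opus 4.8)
The plan is to establish that $\Psi$ is a well-defined $\mathbb{ZP}$-algebra isomorphism by leveraging the field isomorphism $\Phi:\mathcal{F} \to \mathcal{S}/\ker\hat{\psi}$ from \Cref{lem: field isomorphism} together with the pattern-level realization of \Cref{thm: main theorem for patterns}. The key observation is that $\Psi$ should be nothing but the restriction of $\Phi$ to the subalgebra $\mathcal{A}^{\mathrm{g}} \subset \mathcal{F}$, so most of the work reduces to identifying domains and codomains correctly and checking that $\Phi$ maps $\mathcal{A}^{\mathrm{g}}$ bijectively onto $\mathcal{A}^{\mathrm{c}}/\mathcal{I}$.

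First I would verify that $\Psi$ is well-defined. Since $\mathcal{A}^{\mathrm{g}}=\mathbb{ZP}[x_{i;t}^{\mathrm{g}} \mid i,t]$ is generated as a $\mathbb{ZP}$-algebra by the generalized cluster variables, and since by \Cref{thm: main theorem for patterns} we have $\Phi(x_{i;t}^{\mathrm{g}})=[X_{i;t}]$ with each $X_{i;t} \in \mathcal{A}^{\mathrm{c}}$ by \Cref{lem: X in S}, the image $\Phi(\mathcal{A}^{\mathrm{g}})$ lands inside the image of $\mathcal{A}^{\mathrm{c}}$ in $\mathcal{S}/\ker\hat{\psi}$. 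The natural map $\mathcal{A}^{\mathrm{c}} \to \mathcal{S}/\ker\hat{\psi}$ has kernel exactly $\mathcal{I}=\ker\hat{\psi}\cap\mathcal{A}^{\mathrm{c}}$, so it induces an injection $\mathcal{A}^{\mathrm{c}}/\mathcal{I} \hookrightarrow \mathcal{S}/\ker\hat{\psi}$; this realizes $\mathcal{A}^{\mathrm{c}}/\mathcal{I}$ as a $\mathbb{ZP}$-subalgebra of $\mathcal{S}/\ker\hat{\psi}$. Thus $\Psi$ is the corestriction of $\Phi|_{\mathcal{A}^{\mathrm{g}}}$ through this identification, and it is a $\mathbb{ZP}$-algebra homomorphism sending $x_{i;t}^{\mathrm{g}}\mapsto[X_{i;t}]$ and $p\mapsto[p]$.

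Next I would prove injectivity and surjectivity. Injectivity is immediate: $\Phi$ is a field isomorphism, hence injective on all of $\mathcal{F}$, and $\Psi$ is its restriction, so $\Psi$ is injective. For surjectivity, I would argue that $\mathcal{A}^{\mathrm{c}}/\mathcal{I}$ is generated as a $\mathbb{ZP}$-algebra by the classes $[X_{i;t}]$ together with $[\mathbb{ZS}]$-coefficients; the point is that modulo $\ker\hat{\psi}$, the coefficient ring $\mathbb{ZS}$ collapses onto $\mathbb{ZP}$ via $\hat{\psi}$ (using \Cref{lem: S tilde}, which identifies $\mathbb{S}/{\sim}\cong\mathbb{P}$), so every class $[f]$ with $f\in\mathbb{ZS}$ equals $[\hat{\psi}(f)]=[p]$ for some $p\in\mathbb{ZP}$. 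Since $\mathcal{A}^{\mathrm{c}}=\mathbb{ZS}[X_{i;t}\mid i,t]$ is generated by the $X_{i;t}$ over $\mathbb{ZS}$, its image $\mathcal{A}^{\mathrm{c}}/\mathcal{I}$ is generated by the $[X_{i;t}]$ over $[\mathbb{ZS}]=[\mathbb{ZP}]$, and all of these lie in the image of $\Psi$. Hence $\Psi$ is surjective.

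\textbf{The main obstacle} I expect is the surjectivity argument, specifically the clean verification that the coefficient subalgebra $\mathbb{ZS}\subset\mathcal{A}^{\mathrm{c}}$ maps onto $\mathbb{ZP}$ in the quotient. One must be careful that $\ker\hat{\psi}\cap\mathbb{ZS}$ matches the equivalence $\sim$ on $\mathbb{S}$ used to define $\tilde{\mathbb{S}}\cong\mathbb{P}$, so that the coefficient identifications in the bottom row of the diagram \eqref{eq: diagram} are genuinely compatible; this is precisely where the delicate interplay between the additions $\oplus$, $\bar{\oplus}$, and $+$ (as flagged in the introduction) must be handled. Once the coefficient collapse $[\mathbb{ZS}]=[\mathbb{ZP}]$ is confirmed, the rest follows formally from \Cref{thm: main theorem for patterns} and \Cref{lem: field isomorphism}, and the final statement that $\mathcal{A}^{\mathrm{g}}\cong\mathcal{A}^{\mathrm{c}}/\mathcal{I}$ is a subquotient of $\mathcal{A}$ is then immediate from the inclusion $\mathcal{A}^{\mathrm{c}}\subset\mathcal{A}$.
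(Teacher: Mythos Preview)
Your proposal is correct and follows essentially the same route as the paper: both arguments restrict the field isomorphism $\Phi$ of \Cref{lem: field isomorphism} to $\mathcal{A}^{\mathrm{g}}$, invoke \Cref{thm: main theorem for patterns} for the generator correspondence $x_{i;t}^{\mathrm{g}}\mapsto[X_{i;t}]$, and then identify the image with $\mathcal{A}^{\mathrm{c}}/\mathcal{I}$ via the second isomorphism theorem. The ``obstacle'' you flag---the coefficient collapse $[\mathbb{ZS}]=[\mathbb{ZP}]$---is precisely what the paper packages as the isomorphism $\mathbb{Z}\tilde{\mathbb{S}}[[X_{i;t}]\mid i,t]\cong\mathcal{A}^{\mathrm{c}}/\mathcal{I}$ in the bottom row of diagram~\eqref{eq: diagram}, and it is resolved exactly as you suggest, by \Cref{lem: S tilde} together with the fact that $\hat{\psi}$ restricts to the identity on $\mathbb{ZP}$.
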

\begin{proof}
In the bottom line of \eqref{eq: diagram}, the first isomorphism $\mathcal{A} \cong \mathbb{Z}\tilde{\mathbb{S}}[[X_{i;t}]\mid i,t]$, $x_{i;t}^{\mathrm{g}} \mapsto [X_{i;t}]$ is obtained by \Cref{thm: main theorem for patterns}. The second isomorphism $\mathbb{Z}\tilde{\mathbb{S}}[[X_{i;t}] \mid i,t] \cong \mathcal{A}^{\mathrm{c}}/(\mathcal{I})$, $[X_{i;t}] \mapsto [X_{i;t}]$ is due to the standard isomorphism theorem. The map $\Psi: \mathcal{A}^{\mathrm{g}} \to \mathcal{A}^{\mathrm{c}}/\mathcal{I}$ is the composition of them.
\end{proof}

\section{Relations for $C$-matrices, $G$-matrices, and $F$-polynomials}\label{sec: separation formulas}

In this section, we present the relations between
the $C$-matrices, the $G$-matrices, and the $F$-polynomials
of a generalized cluster pattern $\mathbf{\Sigma}^{\mathrm{g}}$
and those of the associated composite cluster pattern $\mathbf{\Sigma}^{\mathrm{c}}$. These relations are compatible with the correspondence \eqref{eq: isomorphism between the cluster pattern}.

\subsection{Separation formulas}

In parallel to the ordinary case \cite{FZ07},
to any generalized cluster pattern $\mathbf{\Sigma}^{\mathrm{g}}$ with $y$-variables, one can associate the collections of the \emph{$C$-matrices} $\{C^{\mathrm{g}}_{t}\}_{t\in \mathbb{T}_n}$, the \emph{$G$-matrices} $\{G^{\mathrm{g}}_{t}\}_{t\in \mathbb{T}_n}$, and the \emph{$F$-polynomials} $\{F^{\mathrm{g}}_{i;t}\}_{i=1,\dots,n; t\in \mathbb{T}_n}$  \cite{Nak15a}.
Here, each $F$-polynomial $F^{\mathrm{g}}_{i;t}(\mathbf{y},\mathbf{z})$ is a polynomial in formal variables $\mathbf{y}=(y_i)_{i=1}^n$ and $\mathbf{z}=(z_{il})_{(i,l) \in \mathcal{D}'}$ with (nonnegative) integer coefficients, where $\mathcal{D}'=\{(i,l) \mid i=1,\dots,n;\ l=1,\dots,r_i-1\}$.
Then, the $x$- and $y$-variables of $\mathbf{\Sigma}^{\mathrm{g}}$ are expressed by the following \emph{separation formulas} \cite{Nak15a, NR16}:
\begin{align}
x^{\mathrm{g}}_{i;t}&=
\left(
\prod_{j=1}^n (x^{\mathrm{g}}_{j})^{g^{\mathrm{g}}_{ji;t}}
\right)
\frac{
F^{\mathrm{g}}_{i;t}(\hat{\mathbf{y}}^{\mathrm{g}}, \hat{\mathbf{z}})
}
{F^{\mathrm{g}}_{i;t}\vert_{\mathbb{P}_0}(\mathbf{y}^{\mathrm{g}}, \mathbf{z})
}
,
\\
y^{\mathrm{g}}_{i;t}&=
\prod_{j=1}^n (y^{\mathrm{g}}_{j})^{c^{\mathrm{g}}_{ji;t}}
\prod_{j=1}^n
F^{\mathrm{g}}_{j;t}\vert_{\mathbb{P}_0}(\mathbf{y}^{\mathrm{g}}, \mathbf{z})^{b_{ji;t}},
\end{align}
where ${\bf x}^{\mathrm{g}}$, ${\bf y}^{\mathrm{g}}$, and $\hat{\bf y}^{\mathrm{g}}$
are the initial $x$-, $y$-, $\hat{y}$-variables of $\mathbf{\Sigma}^\mathrm{g}$,
and $\hat{\mathbf{z}}$ and  ${\mathbf{z}}$ are the coefficients of the initial exchange polynomials $Z_i(u)$
and their images of $\pi_{\mathbb{P}_0}$.
There is an unfortunate coincidence of notations $g$ and $\mathrm{g}$, and also, $c$ and $\mathrm{c}$
for entirely different meanings, and we ask the readers to carefully distinguish them.
\par
Similarly,
to the  composite cluster pattern $\mathbf{\Sigma}^{\mathrm{c}}$,
one can associate the collections of the \emph{$C$-matrices}
$\{C^{\mathrm{c}}_{t}\}_{t\in \mathbb{T}_n}$, the
\emph{$G$-matrices} $\{G^{\mathrm{c}}_{t}\}_{t\in \mathbb{T}_n}$,
and the \emph{$F$-polynomials} $\{F^{\mathrm{c}}_{il;t}\}_{(i,l)\in \mathcal{D}; t\in \mathbb{T}_n}$.
They are the subcollections of the  ones in \cite{FZ07}
 for the ordinary cluster pattern $\mathbf{\Sigma}$.
Here, each $F$-polynomial $F^{\mathrm{c}}_{il;t}(\tilde{\bf y})$ is a polynomial in formal variables $\tilde{\bf y}=(\tilde{y}_{il})_{(i,l)\in \mathcal{D}}$ with (nonnegative) integer coefficients.
The separation formulas for the $x$- and $y$-variables
of $\mathbf{\Sigma}^{\mathrm{c}}$ are given by
\begin{align}
x^{\mathrm{c}}_{il;t}&=
\biggl(
\prod_{(j,m)\in \mathcal{D}} (x^{\mathrm{c}}_{jm})^{g^{\mathrm{c}}_{jm,il;t}}
\biggr)
\frac{
F^{\mathrm{c}}_{il;t}(\hat{\mathbf{y}}^{\mathrm{c}})
}
{F^{\mathrm{c}}_{il;t}\vert_{\overline{\mathbb{P}}}(\mathbf{y}^{\mathrm{c}})
}
,\label{eq: separation formula for x}
\\
y^{\mathrm{c}}_{il;t}&=
\prod_{(j,m)\in \mathcal{D}} (y^{\mathrm{c}}_{jm})^{c^{\mathrm{c}}_{jm,il;t}}
\prod_{(j,m)\in \mathcal{D}}
F^{\mathrm{c}}_{jm;t}\vert_{\overline{\mathbb{P}}}(\mathbf{y}^{\mathrm{c}})^{b_{ji;t}},\label{eq: separation formula for y}
\end{align}
where $\mathbf{x}^{\mathrm{c}}$, $\mathbf{y}^{\mathrm{c}}$, and $\hat{\mathbf{y}}^{\mathrm{c}}$
are  the initial $x$-, $y$-, $\hat{y}$-variables of $\mathbf{\Sigma}^{\mathrm{c}}$.

\subsection{Relations for $C$-matrices and $G$-matrices}
For any $i,j = 1,\dots,n$, set $\delta_{ij}=1$ if $i=j$ and $\delta_{ij}=0$ if $i \neq j$.
The $C$-matrices and the $G$-matrices are determined by the initial condition $c_{ij;t_0}^{\mathrm{g}}=g_{ij;t_0}^{\mathrm{g}}=\delta_{ij}$ and the following recursions for each pair of $k$-adjacent vertices $t,t' \in \mathbb{T}_n$ \cite{Nak15a, NR16}:
\begin{align}
c_{ij;t'}^{\mathrm{g}}&=\begin{cases}
-c_{ik;t}^{\mathrm{g}} & j=k,\\
c_{ij;t}^{\mathrm{g}}+r_k(c_{ik;t}^{\mathrm{g}}[b_{kj;t}]_{+}+[-c_{ik;t}^{\mathrm{g}}]_{+}b_{kj;t}) & j \neq k,
\end{cases}\label{eq: c-mutation}
\\
g_{ij;t'}^{\mathrm{g}}&=\begin{cases}
\displaystyle{-g_{ik;t}^{\mathrm{g}}+r_k\sum_{a=1}^{n}\left(g_{ia;t}^{\mathrm{g}}[-b_{ak;t}]_{+}-b_{ia;t_0}[-c_{ak;t}^{\mathrm{g}}]_{+}\right)} & j=k,\\
g_{ij;t}^{\mathrm{g}} & j \neq k.
\end{cases}\label{eq: g-mutation}
\end{align}
\par
For any $(i,l),(j,m) \in \mathcal{D}$, let $\delta_{il,jm}=1$ if $(i,l)=(j,m)$ and $\delta_{il,jm}=0$ if $(i,l) \neq (j,m)$. 
Recall that the sign $\sigma_{i;t}$ is the one in \eqref{eq: definition of sigma}. Then, the following fact holds.
\begin{lemma}\label{lem: c g lemma}
Let $\tilde{c}_{il,jm;t}^{\mathrm{c}}=c_{il,jm;t}^{\mathrm{c}}-\sigma_{j;t}\delta_{il,jm}$ and $\tilde{g}_{il,jm;t}^{\mathrm{c}}=g_{il,jm;t}^{\mathrm{c}}-\sigma_{j;t}\delta_{il,jm}$. Then, they are independent of $l=1,\dots,r_i$ and $m=1,\dots,r_j$.
\end{lemma}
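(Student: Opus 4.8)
The plan is to prove the statement by induction on $t \in \mathbb{T}_n$, reducing everything to a step-by-step analysis of the composite mutation $\mu_k^{\mathrm{c}}=\mu_{kr_k}\circ\cdots\circ\mu_{k1}$. The base case $t=t_0$ is immediate: since $C_{t_0}^{\mathrm{c}}$ and $G_{t_0}^{\mathrm{c}}$ are identity matrices and $\sigma_{j;t_0}=1$, both $\tilde{c}_{il,jm;t_0}^{\mathrm{c}}$ and $\tilde{g}_{il,jm;t_0}^{\mathrm{c}}$ vanish, hence are trivially independent of $l,m$. For the inductive step I would fix $k$-adjacent vertices $t,t'$ and record the two structural facts on which the whole argument rests: first, the block-constancy $\mathcal{B}_{il,jm;t}=b_{ij;t}$ and $\mathcal{B}_{il,jm;t_0}=b_{ij;t_0}$ coming from the enlargement \eqref{eq: definition of enlargements}; and second, the vanishing $\mathcal{B}^{kk}=O$, which ensures that the single mutations inside $\mu_k^{\mathrm{c}}$ do not interact. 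Concretely, the latter guarantees that the column indexed by $(k,p)$ in $\mathcal{B}$, in $C^{\mathrm{c}}$, and in $G^{\mathrm{c}}$ keeps its value at $t$ until its own mutation $\mu_{kp}$ is applied. Since $C^{\mathrm{c}}_t,G^{\mathrm{c}}_t$ are subcollections of the ordinary $C$-, $G$-matrices of $\mathbf{\Sigma}$, each $\mu_{kp}$ obeys the ordinary recursions, i.e.\ \eqref{eq: c-mutation} and \eqref{eq: g-mutation} with $r=1$.

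For the $C$-matrices I would phrase the inductive hypothesis as $c_{il,jm;t}^{\mathrm{c}}=\tilde{c}_{ij;t}+\sigma_{j;t}\delta_{il,jm}$, where $\tilde{c}_{ij;t}$ denotes the common value. Columns with $j=k$ are merely negated by $\mu_{kp}$ (the cross terms vanish since $\mathcal{B}^{kk}=O$), and combined with $\sigma_{k;t'}=-\sigma_{k;t}$ this yields $\tilde{c}_{il,kp;t'}^{\mathrm{c}}=-\tilde{c}_{ik;t}$. For columns with $j\neq k$, accumulating the contributions of $\mu_{k1},\dots,\mu_{kr_k}$, each using the frozen $t$-value of column $(k,p)$, gives
\begin{equation*}
c_{il,jm;t'}^{\mathrm{c}}=c_{il,jm;t}^{\mathrm{c}}+\sum_{p=1}^{r_k}\bigl(c_{il,kp;t}^{\mathrm{c}}[b_{kj;t}]_{+}+[-c_{il,kp;t}^{\mathrm{c}}]_{+}b_{kj;t}\bigr).
\end{equation*}
Substituting the hypothesis, the summand is constant in $p$ except at the single index $p=l$ (which occurs only when $i=k$), so the sum is independent of $l$; together with $\sigma_{j;t'}=\sigma_{j;t}$ and the cancellation of the Kronecker term, this shows $\tilde{c}_{il,jm;t'}^{\mathrm{c}}$ is independent of $l,m$, in the exact shape of \eqref{eq: c-mutation}.

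For the $G$-matrices I would argue analogously with $g_{il,jm;t}^{\mathrm{c}}=\tilde{g}_{ij;t}+\sigma_{j;t}\delta_{il,jm}$. Columns with $j\neq k$ are never a mutation direction inside $\mu_k^{\mathrm{c}}$, so their independence is inherited directly. The delicate columns are those with $j=k$, whose new values involve the \emph{intermediate} seeds of $\mu_k^{\mathrm{c}}$. The crucial point is that in the term $\sum_{(a,b)}g_{il,ab}^{\mathrm{c}}[-\mathcal{B}_{ab,kp}]_{+}$ of the $g$-recursion the coefficient equals $[-b_{ak;t}]_{+}$ by block-constancy, which vanishes for $a=k$ because $b_{kk;t}=0$; hence only the $G$-columns of blocks $a\neq k$ contribute, and these are not mutated during $\mu_k^{\mathrm{c}}$, so they equal their values at $t$. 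Splitting $\sum_{(a,b)}=\sum_a\sum_b$, using $\sum_{b=1}^{r_a}\delta_{il,ab}=\delta_{ia}$, and noting that both $\mathcal{B}_{ab,kp}=b_{ak;t}$ and the column $c_{ab,kp;t}^{\mathrm{c}}$ retain their $t$-values, the summation over $b$ makes each special diagonal term appear exactly once, so the whole expression is independent of $p$; combined with $\sigma_{k;t'}=-\sigma_{k;t}$ this gives $\tilde{g}_{il,kp;t'}^{\mathrm{c}}=-\tilde{g}_{ik;t}+(\text{a term independent of }l,p)$.

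The main obstacle, and the step I would treat most carefully, is precisely the $G$-matrix recursion for the $j=k$ columns. Unlike the $C$-matrix mutation, the $G$-mutation couples the current $C$-matrix, the current exchange matrix, and the initial exchange matrix, and a priori the intermediate seeds of $\mu_k^{\mathrm{c}}$ are \emph{not} enlargements, so their matrices need not be block-constant. The technical heart of the proof is therefore to verify that all quantities actually entering the computation—the column $(k,p)$ of $\mathcal{B}$, the column $(k,p)$ of $C^{\mathrm{c}}$, and the contributing $G$-columns of blocks $a\neq k$—are frozen at their $t$-values until the step $\mu_{kp}$, which is exactly what $\mathcal{B}^{kk}=O$ and $b_{kk;t}=0$ provide. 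Once this is established, the computation collapses to the desired $l,p$-independent form.
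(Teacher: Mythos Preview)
Your proof is correct and follows essentially the same inductive approach as the paper: establish the composite-mutation formulas \eqref{eq: c composite mutation} and \eqref{eq: g composite mutation} and then use the inductive hypothesis to see that the relevant sums over $p$ (resp.\ over $(a,p)$) are independent of $l$ and $m$. Your extra care in justifying, via $\mathcal{B}^{kk}=O$, why the intermediate seeds inside $\mu_k^{\mathrm{c}}$ do not disturb the $t$-values entering the $G$-recursion is a welcome elaboration of a point the paper leaves implicit, but it does not constitute a different strategy.
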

\begin{proof}
For $t=t_0$, since $\tilde{c}_{il,jm;t_0}^{\mathrm{c}}=\tilde{g}_{il,jm;t_0}^{\mathrm{c}}=0$, the claim holds. Suppose that the claim holds for some $t \in \mathbb{T}_n$. Let $t' \in \mathbb{T}_n$ be $k$-adjacent to $t$. Consider the $C$-matrix $C_{t'}^{\mathrm{c}}$. By applying \eqref{eq: c-mutation} with ${\bf r}=(1,\dots,1)$ repeatedly, we have
\begin{equation}\label{eq: c composite mutation}
c_{il,jm;t'}^{\mathrm{c}}=\begin{cases}
-c_{il,km;t}^{\mathrm{c}} & j=k,\\
\displaystyle{c_{il,jm;t}^{\mathrm{c}}+\sum_{p=1}^{r_k} (c_{il,kp;t}^{\mathrm{c}}[b_{kj;t}]_{+}+[-c_{il,kp;t}^{\mathrm{c}}]_{+}b_{kj;t})} & j \neq k.
\end{cases}
\end{equation}
When $j=k$, the claim holds because $\tilde{c}_{il,km;t'}^{\mathrm{c}}=-\tilde{c}_{il,km;t}^{\mathrm{c}}$. Consider the case $j \neq k$. By the assumption, the sums $\sum_{p=1}^{r_k}c_{il,kp;t}^{\mathrm{c}}$ and $\sum_{p=1}^{r_k}[-c_{il,kp;t}^{\mathrm{c}}]_{+}$ are independent of $l=1,\dots,n$. Therefore, we have the claim by \eqref{eq: c composite mutation}. For the $G$-matrix $G_{t'}^{\mathrm{c}}$, when $j \neq k$, we have $\tilde{g}_{il,jm;t'}^{\mathrm{c}}=\tilde{g}_{il,jm;t}^{\mathrm{c}}$. Thus, the claim holds. When $j=k$, we have
\begin{equation}\label{eq: g composite mutation}
\begin{aligned}
g_{il,km;t'}^{\mathrm{c}}
=
-g_{il,km;t}^{\mathrm{c}}+\sum_{a=1}^{n}\sum_{p=1}^{r_a}\left(g_{il,ap;t}^{\mathrm{c}}[-b_{ak;t}]_{+}-b_{ia;t_0}[-c_{ap,km;t}^{\mathrm{c}}]_{+}\right).
\end{aligned}
\end{equation}
Then, the rest of the argument is similar to the above.
\end{proof}
In view of \Cref{lem: c g lemma}, we set
\begin{equation}\label{eq: definition of tilde c g}
\tilde{c}_{ij;t}^{\mathrm{c}}=c_{il,jm;t}^{\mathrm{c}}-\sigma_{j;t}\delta_{il,jm},
\quad
\tilde{g}_{ij;t}^{\mathrm{c}}=g_{il,jm;t}^{\mathrm{c}}-\sigma_{j;t}\delta_{il,jm},
\end{equation}
which are independent of $l$ and $m$.
Then, we have the following relations.
\begin{proposition}\label{prop: relationship in C-matrices and G-matrices}
The following equalities hold:
\begin{align}
c_{ij;t}^{\mathrm{g}}&=\sum_{l=1}^{r_i}c_{il,jm_0;t}^{\mathrm{c}}=\frac{r_i}{r_j}\sum_{m=1}^{r_j}c_{il_0,jm;t}^{\mathrm{c}}=r_i\tilde{c}_{ij;t}^{\mathrm{c}}+\sigma_{j;t}\delta_{ij}, \label{eq: C-relation}
\\
g_{ij;t}^{\mathrm{g}}&=\sum_{m=1}^{r_j}g_{il_0,jm;t}^{\mathrm{c}}=\frac{r_j}{r_i}\sum_{l=1}^{r_i}g_{il,jm_0;t}^{\mathrm{c}}=r_j\tilde{g}_{ij;t}^{\mathrm{c}}+\sigma_{j;t}\delta_{ij}, \label{eq: G-relation}
\end{align}
where $l_0=1,\dots,r_i$ and $m_0=1,\dots,r_j$ are arbitrary.
\end{proposition}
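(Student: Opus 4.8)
The plan is to prove the three-part identities \eqref{eq: C-relation} and \eqref{eq: G-relation} by induction on $t \in \mathbb{T}_n$, running in parallel with the proof of \Cref{lem: c g lemma}, since both rest on the same observation that the composite mutations sum up the $r_k$ ordinary mutations in the $k$th block. The equalities $r_i\tilde{c}_{ij;t}^{\mathrm{c}}+\sigma_{j;t}\delta_{ij}$ and $r_j\tilde{g}_{ij;t}^{\mathrm{c}}+\sigma_{j;t}\delta_{ij}$ appearing as the rightmost expressions are, by \eqref{eq: definition of tilde c g}, just reformulations of the summed quantities, so the substance is to show that $c_{ij;t}^{\mathrm{g}}$ and $g_{ij;t}^{\mathrm{g}}$ equal the stated block sums, and that the two ways of summing (over $l$ for a fixed $m_0$, versus over $m$ for a fixed $l_0$, with the ratio $r_i/r_j$) agree. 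First I would record the base case: at $t=t_0$ all four kinds of entries equal $\delta_{ij}$ or $\delta_{il,jm}$, and $\sigma_{j;t_0}=1$, so every expression in \eqref{eq: C-relation} and \eqref{eq: G-relation} reduces to $\delta_{ij}$ and the identities hold trivially.

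For the inductive step on the $C$-matrices, I would fix $k$-adjacent vertices $t,t'$ and assume both \eqref{eq: C-relation} for $t$ and \Cref{lem: c g lemma}. Summing \eqref{eq: c composite mutation} over $l=1,\dots,r_i$ (with $m_0$ fixed) and comparing with the generalized recursion \eqref{eq: c-mutation} is the crux. In the case $j=k$ both recursions simply flip sign, so the identity is preserved. In the case $j\neq k$, the inner sum $\sum_{p=1}^{r_k}\bigl(c_{il,kp;t}^{\mathrm{c}}[b_{kj;t}]_{+}+[-c_{il,kp;t}^{\mathrm{c}}]_{+}b_{kj;t}\bigr)$ must be matched against $r_k\bigl(c_{ik;t}^{\mathrm{g}}[b_{kj;t}]_{+}+[-c_{ik;t}^{\mathrm{g}}]_{+}b_{kj;t}\bigr)$. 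Here I would invoke the sign-coherence of $C$-matrices (each column of $C_t^{\mathrm{c}}$ has a definite sign), so that for fixed $l$ the $r_k$ entries $c_{il,kp;t}^{\mathrm{c}}$, $p=1,\dots,r_k$, differ from $\sigma_{k;t}\delta_{il,kp}$ by a common $l$-independent value and moreover all share one sign; this lets $\sum_p [\pm c_{il,kp;t}^{\mathrm{c}}]_+$ collapse to $[\pm(\text{block sum})]_+$, turning the sum over $p$ into exactly $r_k$ times the generalized quantity once the further sum over $l$ invokes the inductive hypothesis $\sum_l c_{il,kp;t}^{\mathrm{c}} = c_{ik;t}^{\mathrm{g}}$.

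The $G$-matrix step is analogous but uses the $G$-recursion \eqref{eq: g composite mutation}. For $j\neq k$ both $\tilde g$ and $g^{\mathrm{g}}$ are unchanged, so I would only need that summing over $m$ (respectively $l$) preserves the hypothesis, which is immediate. For $j=k$, I would sum \eqref{eq: g composite mutation} over $m=1,\dots,r_k$ and compare with \eqref{eq: g-mutation}; the term $-g_{il,km;t}^{\mathrm{c}}$ sums to $-g_{ik;t}^{\mathrm{g}}/?$ via the hypothesis, the term $g_{il,ap;t}^{\mathrm{c}}[-b_{ak;t}]_+$ produces the factor $r_k$ after summing over $m$ and collapsing the double sum over $(a,p)$ using \eqref{eq: G-relation} for $g^{\mathrm{g}}_{ia;t}$, and the term $-b_{ia;t_0}[-c_{ap,km;t}^{\mathrm{c}}]_+$ again collapses by sign-coherence of the relevant $C$-column together with \eqref{eq: C-relation}. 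I would handle the bookkeeping of the two summation conventions and the ratio $r_i/r_j$ (resp. $r_j/r_i$) by the $l,m$-independence from \Cref{lem: c g lemma}, so that $\sum_{l=1}^{r_i}$ of an $l$-independent quantity is just $r_i$ times it.

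The main obstacle will be the precise matching of the plus-bracket $[\,\cdot\,]_+$ terms under summation: the recursions for $C$ and $G$ are nonlinear in the matrix entries through $[-c]_+$, so one cannot naively sum the recursion. The key technical input that resolves this is sign-coherence, namely that for fixed $(i,l)$ and fixed direction $k$ the entries $c_{il,kp;t}^{\mathrm{c}}$ over $p=1,\dots,r_k$ all have the same sign (equal to $\sigma_{k;t}$), which in turn follows from \Cref{lem: c g lemma} since they differ only by the diagonal contribution $\sigma_{k;t}\delta_{il,kp}$ from a common value. Verifying carefully that this common sign lets $\sum_p[-c_{il,kp;t}^{\mathrm{c}}]_+ = [-\sum_p c_{il,kp;t}^{\mathrm{c}}]_+$ (and likewise for $[\,\cdot\,]_+$) is the delicate point on which the whole identity turns.
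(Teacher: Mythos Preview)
Your approach is essentially the same as the paper's: induction on $t$, summing the composite recursion \eqref{eq: c composite mutation} (resp.\ \eqref{eq: g composite mutation}) and matching it against \eqref{eq: c-mutation} (resp.\ \eqref{eq: g-mutation}), with the only nontrivial step being to pull the sum through $[-\,\cdot\,]_+$. Two small points are worth cleaning up.

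First, you do not need to invoke column sign-coherence of ordinary cluster algebras as an external input. The paper derives the needed fact directly from \Cref{lem: c g lemma}: since $c_{il,kp;t}^{\mathrm{c}}=\tilde{c}_{ik;t}^{\mathrm{c}}+\sigma_{k;t}\delta_{il,kp}$ with $\tilde{c}_{ik;t}^{\mathrm{c}}\in\mathbb{Z}$ and $\sigma_{k;t}\in\{\pm 1\}$, the values in any row or column of the $(i,k)$-block differ by at most one and hence automatically share a weak sign. You eventually say this too, but your parenthetical claim that this common sign is ``equal to $\sigma_{k;t}$'' is false in general (look at the $(1,1)$-column of $C_{t_3}^{\mathrm{c}}$ in Table~\ref{tab: 2nd example}: all entries are negative while $\sigma_{1;t_3}=1$). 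Fortunately the proof needs only the common weak sign, not its value.

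Second, the paper takes the $l$-sum first (for fixed $p$), giving $\sum_{l}[-c_{il,kp;t}^{\mathrm{c}}]_+=[-c_{ik;t}^{\mathrm{g}}]_+$ by the inductive hypothesis, after which the $p$-sum is just multiplication by $r_k$. Your order (sum over $p$ first) also works but produces an intermediate factor $r_k/r_i$ that then has to be cleared by the outer $l$-sum; the paper's order is slightly cleaner. Your ``$/?$'' for $\sum_m(-g_{il,km;t}^{\mathrm{c}})$ is resolved directly by the inductive hypothesis $\sum_m g_{il_0,km;t}^{\mathrm{c}}=g_{ik;t}^{\mathrm{g}}$.
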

\begin{proof}
For $t=t_0$, the claim holds by definition. Suppose that the claim holds for some $t \in \mathbb{T}_n$, and let $t' \in \mathbb{T}_n$ be $k$-adjacent to $t$. Consider the $C$-matrix $C_{t'}^{\mathrm{c}}$. By \eqref{eq: c composite mutation}, the claim holds when $j=k$. When $j \neq k$, we have
\begin{equation}
\sum_{l=1}^{r_i}c_{il,jm;t'}^{\mathrm{c}}=\sum_{l=1}^{r_i}c_{il,jm;t}^{\mathrm{c}}+\sum_{p=1}^{r_k}\left(\sum_{l=1}^{r_i}c_{il,kp;t}^{\mathrm{c}}[b_{kj;t}]_{+}+\sum_{l=1}^{r_i}[-c_{il,kp;t}^{\mathrm{c}}]_{+}b_{kj;t}\right).
\end{equation}
Since $\sigma_{j;t} \in \{\pm 1\}$, $c_{il,kp;t}^{\mathrm{c}}$ ($l=1,\dots,r_i$) are either all nonnegative or all nonpositive. Thus, we have
\begin{equation}\label{eq: c commutativity}
\sum_{l=1}^{r_i}[-c_{il,kp;t}^{\mathrm{c}}]_{+}=[-c_{ik;t}^{\mathrm{g}}]_{+}.
\end{equation}
Then, we have
\begin{equation}
\begin{aligned}
\sum_{l=1}^{r_i}c_{il,jm;t'}^{\mathrm{c}}
=
c_{ij;t}^{\mathrm{g}}+r_k(c_{ik;t}^{\mathrm{g}}[b_{kj;t}]_{+}+[-c_{ik;t}^{\mathrm{g}}]_{+}b_{kj;t})\overset{\eqref{eq: c-mutation}}{=}c_{ij;t'}^{\mathrm{g}}.
\end{aligned}
\end{equation}
Thus, the first equality in \eqref{eq: C-relation} holds. Then, by \eqref{eq: definition of tilde c g}, we also obtain the others in \eqref{eq: C-relation}. For the $G$-matrix $G_{t'}^{\mathrm{c}}$, when $j \neq k$, the claim holds because $g_{il,jm;t'}^{\mathrm{c}}=g_{il,jm;t}^{\mathrm{c}}$. When $j=k$, by \eqref{eq: g composite mutation} and \eqref{eq: c commutativity}, we have
\begin{equation}
\begin{aligned}
\sum_{m=1}^{r_j}g_{il,km;t'}^{\mathrm{c}}=-g_{ik;t}^{\mathrm{g}}+r_j\sum_{a=1}^{n}\left(g_{ia;t}^{\mathrm{g}}[-b_{ak;t}]_{+}-b_{ia;t_0}[-c_{ak;t}^{\mathrm{g}}]_{+}\right)\overset{\eqref{eq: g-mutation}}{=}g_{ik;t'}^{\mathrm{g}}.
\end{aligned}
\end{equation}
Thus, the first equality in \eqref{eq: G-relation} holds. Then, by \eqref{eq: definition of tilde c g}, we also obtain the others in \eqref{eq: G-relation}.
\end{proof}

\subsection{Relation for $F$-polynomials}
For each $i=1,\dots,n$, let
\begin{equation}
\mathcal{Z}_{i;t_0}(u)=1+z_{i1}u+z_{i2}u^2+\cdots+z_{ir_i-1}u^{r_i-1}+u^{r_i},
\end{equation}
and we recursively define $\mathcal{Z}_{i;t}(u)$ ($t \in \mathbb{T}_n$) by $\mathcal{Z}_{i;t'}(u)=\mathcal{Z}_{i;t}(u)$ if $i \neq k$ and $\mathcal{Z}_{k;t'}(u)=\mathcal{Z}_{k;t}^{\mathrm{rec}}(u)$ for any pair of $k$-adjacent vertices $t,t' \in \mathbb{T}_n$.
The $F$-polynomials are determined by the initial condition $F_{i;t_0}=1$ ($i=1,\dots,n$) and the following recursion for each pair of $k$-adjacent vertices $t,t' \in \mathbb{T}_n$ \cite{Nak15a, NR16}:
\begin{align}
F_{i;t'}^{\mathrm{g}}({\bf y},{\bf z})&=\begin{cases}
F_{k;t}^{\mathrm{g}}({\bf y},{\bf z})^{-1} \cdot M({\bf y},{\bf z}) & i=k,\\
F_{i;t}^{\mathrm{g}}({\bf y},{\bf z}) & i \neq k,
\end{cases}\label{eq: F-mutation}
\end{align}
where
\begin{equation}\label{eq: M polynomial}
M({\bf y},{\bf z})=\left(\prod_{j=1}^{n}y_j^{[-c_{jk;t}^{\mathrm{g}}]_{+}}F_{j;t}^{\mathrm{g}}({\bf y},{\bf z})^{[-b_{jk;t}]_{+}}\right)^{r_k}\mathcal{Z}_{k;t}\left(\prod_{j=1}^{n}y_j^{c_{jk;t}^{\mathrm{g}}}F_{j;t}^{\mathrm{g}}({\bf y},{\bf z})^{b_{jk;t}}\right).
\end{equation}
\par
Recall that each $F$-polynomial of the composite cluster pattern ${\bf \Sigma}^{\mathrm{c}}$ is a polynomial in $\tilde{\bf y}=(\tilde{y}_{il})_{(i,l) \in \mathcal{D}}$.
Let $\mathfrak{S}(\mathcal{D})$ be the symmetric group of $\mathcal{D}$. For any $\sigma \in \mathfrak{S}(\mathcal{D})$, we write
\begin{equation}
\tilde{\bf y}^{\sigma}=(\tilde{y}_{\sigma(i,l)})_{(i,l) \in \mathcal{D}}.
\end{equation}
For each $j=1,\dots,n$, let $\mathcal{D}_j=\{(j,m) \in \mathcal{D} \mid m=1,\dots,r_j\}$. Then, the following fact holds.
\begin{lemma}\label{lem: symmrtry of F-polynomials}
Let $(i,l) \in \mathcal{D}$ and $t \in \mathbb{T}_n$.
Let $\sigma \in \mathfrak{S}(\mathcal{D})$ satisfy $\sigma(\mathcal{D}_j) = \mathcal{D}_j$ for any $j=1,\dots,n$. Then, we have
\begin{equation}\label{eq: symmetry of F-polynomials}
F_{il;t}^{\mathrm{c}}(\tilde{\bf y}^{\sigma})=F_{\sigma(i,l);t}^{\mathrm{c}}(\tilde{\bf y}).
\end{equation}
\end{lemma}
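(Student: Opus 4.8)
The plan is to prove \eqref{eq: symmetry of F-polynomials} by induction on $t \in \mathbb{T}_n$, mirroring the recursive definition \eqref{eq: F-mutation} of the $F$-polynomials of $\mathbf{\Sigma}^{\mathrm{c}}$. The base case $t = t_0$ is immediate since $F_{il;t_0}^{\mathrm{c}} = 1$ for all $(i,l)$, so both sides of \eqref{eq: symmetry of F-polynomials} equal $1$. For the inductive step, I fix a permutation $\sigma \in \mathfrak{S}(\mathcal{D})$ preserving each block $\mathcal{D}_j$, assume the identity holds at some $t \in \mathbb{T}_n$, and verify it at a $k$-adjacent vertex $t'$.

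First I would split into the two cases of the ordinary $F$-mutation \eqref{eq: F-mutation} applied to the composite pattern (with $\mathbf{r} = (1,\dots,1)$ and the composite recursion built from $\mu^{\mathrm{c}}_k$). For the directions $(i,l)$ with $i \neq k$, the $F$-polynomial is unchanged under a single composite mutation, i.e. $F_{il;t'}^{\mathrm{c}} = F_{il;t}^{\mathrm{c}}$, and since $\sigma$ preserves $\mathcal{D}_j$ we also have $\sigma(i,l) \in \mathcal{D}_i$, so $F_{\sigma(i,l);t'}^{\mathrm{c}} = F_{\sigma(i,l);t}^{\mathrm{c}}$; the claim then follows directly from the induction hypothesis. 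The substantive case is $i = k$. Here I must expand the composite mutation $\mu^{\mathrm{c}}_k = \mu_{kr_k} \circ \cdots \circ \mu_{k1}$ as an iterated ordinary $F$-mutation and track how the argument $\tilde{\bf y}^{\sigma}$ propagates through the monomial $M$ in \eqref{eq: M polynomial}. The key structural point is that within the block $\mathcal{D}_k$ the relevant exchange data depends only on the block index $k$ (through $\mathcal{B}^{ij}_{lm} = b_{ij}$, and through \Cref{lem: c g lemma} which guarantees $\tilde{c}^{\mathrm{c}}_{il,km;t}$ and $\tilde{g}^{\mathrm{c}}_{il,km;t}$ are independent of the block-internal indices), so that reindexing the $y$-variables by $\sigma$ inside a block permutes the $r_k$ successive single mutations among themselves without changing the composite outcome.

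The heart of the argument is therefore to show that substituting $\tilde{\bf y}^{\sigma}$ into $F_{kl;t'}^{\mathrm{c}}$ reproduces $F_{\sigma(k,l);t'}^{\mathrm{c}}(\tilde{\bf y})$. I would write the recursion explicitly as a product over $p = 1,\dots,r_k$ of factors of the form $(1 + \hat{y})$-type contributions, each indexed by a position $(k,p)$ in the block being mutated, and observe that $\sigma$ restricted to $\mathcal{D}_k$ is a bijection of $\{1,\dots,r_k\}$. Applying the substitution $\tilde{y}_{jm} \mapsto \tilde{y}_{\sigma(j,m)}$ and using the induction hypothesis on each factor $F_{jm;t}^{\mathrm{c}}$ appearing in $M$, the product over the block is carried to the product with $p$ replaced by $\sigma(k,\cdot)$, which is exactly the recursion producing $F^{\mathrm{c}}_{\sigma(k,l);t'}$. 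The main obstacle I anticipate is bookkeeping: one must confirm that the \emph{order} in which the single mutations $\mu_{kp}$ are composed is irrelevant (already noted after \Cref{def: composite mutation} via $\mathcal{B}^{kk} = O$), and that the exponents $c^{\mathrm{c}}_{jm,kp;t}$ and $b_{jk;t}$ entering $M$ transform correctly under $\sigma$ — this is precisely where the block-independence from \Cref{lem: c g lemma} is indispensable. Once this reindexing is made rigorous, the inductive step closes and the identity \eqref{eq: symmetry of F-polynomials} holds for all $t$.
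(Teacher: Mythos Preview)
Your overall strategy---induction on $t$, trivial base case, the $i\neq k$ case handled by $F_{il;t'}^{\mathrm{c}}=F_{il;t}^{\mathrm{c}}$, and the $i=k$ case resting on \Cref{lem: c g lemma}---is exactly the paper's approach. You have also correctly identified the two ingredients needed in the inductive step: the induction hypothesis applied to the factors $F_{jm;t}^{\mathrm{c}}$, and the block-independence of the $c$-entries.

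However, your description of the $i=k$ case is muddled in a way that would derail the write-up if followed literally. You speak of writing the recursion for $F_{kl;t'}^{\mathrm{c}}$ ``as a product over $p=1,\dots,r_k$ of factors of the form $(1+\hat y)$-type contributions'' and then permuting that product by $\sigma$. That is not the shape of the recursion. Because $\mathcal{B}^{kk}=O$, the $r_k$ single mutations comprising $\mu_k^{\mathrm{c}}$ commute and each one touches exactly one $F$-polynomial in the $k$-block; so $F_{kl;t'}^{\mathrm{c}}$ is obtained from $F_{kl;t}^{\mathrm{c}}$ by a \emph{single} ordinary $F$-mutation, namely
\[
F_{kl;t'}^{\mathrm{c}}(\tilde{\mathbf y})=F_{kl;t}^{\mathrm{c}}(\tilde{\mathbf y})^{-1}\,P_{kl}(\tilde{\mathbf y})\,Q_{kl}(\tilde{\mathbf y}),
\]
with $P_{kl},Q_{kl}$ as in \eqref{eq: polynomial P Q}. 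There is no product over the block to permute. The actual reindexing happens inside the product over \emph{all} $(j,m)\in\mathcal{D}$ defining $P_{kl}$ and $Q_{kl}$: substituting $\tilde{\mathbf y}^{\sigma}$, changing the summation variable $(j,m)\mapsto\sigma(j,m)$, applying the induction hypothesis to each $F_{jm;t}^{\mathrm{c}}$, and using the identity $c^{\mathrm{c}}_{\sigma^{-1}(j,m),kl;t}=c^{\mathrm{c}}_{jm,\sigma(k,l);t}$ (which is what \Cref{lem: c g lemma} gives) yields $P_{kl}(\tilde{\mathbf y}^{\sigma})=P_{\sigma(k,l)}(\tilde{\mathbf y})$ and $Q_{kl}(\tilde{\mathbf y}^{\sigma})=Q_{\sigma(k,l)}(\tilde{\mathbf y})$. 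Once you rewrite the $i=k$ step this way, your proof matches the paper's exactly.
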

\begin{proof}
Suppose that the claim holds for some $t \in \mathbb{T}_n$.
Let $t' \in \mathbb{T}_n$ be $k$-adjacent to $t$.
Set
\begin{equation}\label{eq: polynomial P Q}
\begin{aligned}
P_{kl}(\tilde{\bf y})&=\prod_{(j,m) \in \mathcal{D}} \tilde{y}_{jm}^{[-c_{jm,kl;t}^{\mathrm{c}}]_{+}}F_{jm;t}^{\mathrm{c}}(\tilde{\bf y})^{[-b_{jk;t}]_{+}},
\\
Q_{kl}(\tilde{\bf y})&=1+\prod_{(j,m) \in \mathcal{D}} \tilde{y}_{jm}^{c_{jm,kl;t}^{\mathrm{c}}}F_{jm;t}^{\mathrm{c}}(\tilde{\bf y})^{b_{jk;t}}.
\end{aligned}
\end{equation}
By \eqref{eq: F-mutation} with ${\bf r}=(1,\dots,1)$, we have $F_{kl;t'}^{\mathrm{c}}(\tilde{\bf y})=F_{kl;t}^{\mathrm{c}}(\tilde{\bf y})^{-1}P_{kl}(\tilde{\bf y})Q_{kl}(\tilde{\bf y})$.
Also, by \Cref{lem: c g lemma}, $c_{\sigma^{-1}(j,m),kl}^{\mathrm{c}}=c_{jm,\sigma(k,l)}^{\mathrm{c}}$ holds. So, we have $P_{kl}(\tilde{\bf y}^{\sigma})=P_{\sigma(k,l)}(\tilde{\bf y})$ and $Q_{kl}(\tilde{\bf y}^{\sigma})=Q_{\sigma(k,l)}(\tilde{\bf y})$. Thus, the claim holds.
\end{proof}
We introduce formal variables $s_{jm}$ ($(j,m) \in \mathcal{D}$), and we set
\begin{equation}
{\bf s}=(s_{jm})_{(j,m) \in \mathcal{D}},
\quad
{\bf s}{\bf y}=(s_{jm}y_j)_{(j,m) \in \mathcal{D}}.
\end{equation}
Then, by \Cref{lem: symmrtry of F-polynomials}, for each $i$ and $j$, the product $\prod_{l=1}^{r_i}F_{il;t}^{\mathrm{c}}({\bf sy})$ is symmetric with respect to $s_{j1},\dots,s_{jr_j}$. In particular, it can be expressed as a polynomial in ${\bf y}$ and ${\bf e}=(e_{jm})_{(j,m) \in \mathcal{D}}$, where $e_{jm}$ is the elementary symmetric polynomial in $s_{j1},\dots,s_{jr_j}$ of degree $m$. In this expression, we apply the substitution $e_{jm} = z_{jm}$ ($(j,m) \in \mathcal{D}'$) and $e_{jr_j} = 1$ ($j=1,\dots,n$). The resulting polynomial in ${\bf y}$ and ${\bf z}$ is denoted by $\prod_{l=1}^{r_i}F_{il;t}^{\mathrm{c}}({\bf sy})|_{{\bf e}={\bf z}}$, for simplicity, where we slightly abuse the symbol ${\bf z}$.
\par
Let $\tau_{il,im} \in \mathfrak{S}(\mathcal{D})$ be the transposition of $(i,l)$ and $(i,m)$ for $l\neq m$ and the identity for $l=m$.
Then, we have the following relations.
\begin{proposition}\label{prop: F-relation}
For arbitrary $l_0=1,\dots,r_i$, the following equalities hold:
\begin{equation}\label{eq: F-relation}
F_{i;t}^{\mathrm{g}}({\bf y},{\bf z})=\left.\prod_{l=1}^{r_i}F_{il;t}^{\mathrm{c}}({\bf s}{\bf y})\right|_{{\bf e} = {\bf z}}=\left.\prod_{m=1}^{r_i}F_{il_0;t}^{\mathrm{c}}\left(({\bf s}{\bf y})^{\tau_{il_0,im}}\right)\right|_{{\bf e} = {\bf z}}.
\end{equation}
\end{proposition}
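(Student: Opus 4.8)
The plan is to prove both equalities of \eqref{eq: F-relation} by induction on $t\in\mathbb{T}_n$, taking the middle expression $\prod_{l=1}^{r_i}F_{il;t}^{\mathrm{c}}({\bf s}{\bf y})\vert_{{\bf e}={\bf z}}$ as the primary object and deriving the right-hand one from \Cref{lem: symmrtry of F-polynomials}. First I would dispose of the second equality: since $\tau_{il_0,im}$ permutes $\mathcal{D}_i$ and fixes every other block $\mathcal{D}_j$, \Cref{lem: symmrtry of F-polynomials} gives $F_{il_0;t}^{\mathrm{c}}(({\bf s}{\bf y})^{\tau_{il_0,im}})=F_{\tau_{il_0,im}(i,l_0);t}^{\mathrm{c}}({\bf s}{\bf y})=F_{im;t}^{\mathrm{c}}({\bf s}{\bf y})$, so $\prod_{m=1}^{r_i}F_{il_0;t}^{\mathrm{c}}(({\bf s}{\bf y})^{\tau_{il_0,im}})=\prod_{m=1}^{r_i}F_{im;t}^{\mathrm{c}}({\bf s}{\bf y})$, which is the middle product. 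The same lemma shows this product is symmetric in each block $s_{j1},\dots,s_{jr_j}$, hence lies in the subring generated by the ${\bf e}$'s and the ${\bf y}$'s, so the substitution $\vert_{{\bf e}={\bf z}}$ is a well-defined multiplicative map (the denominators $e_{jr_j}$ that appear below are sent to $1\neq 0$).

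For the first equality, the base case $t=t_0$ is $1=1$, and for a $k$-adjacent pair $t,t'$ the directions $i\neq k$ are unchanged, so only $i=k$ requires work. Using the ordinary $F$-recursion $F_{kl;t'}^{\mathrm{c}}=(F_{kl;t}^{\mathrm{c}})^{-1}P_{kl}Q_{kl}$ with $P_{kl},Q_{kl}$ as in \eqref{eq: polynomial P Q} and taking the product over $l$, I must identify $\prod_{l=1}^{r_k}P_{kl}({\bf s}{\bf y})\vert_{{\bf e}={\bf z}}$ and $\prod_{l=1}^{r_k}Q_{kl}({\bf s}{\bf y})\vert_{{\bf e}={\bf z}}$ with the two factors of $M({\bf y},{\bf z})$ in \eqref{eq: M polynomial}; the induction hypothesis $\prod_l F_{kl;t}^{\mathrm{c}}\vert_{{\bf e}={\bf z}}=F_{k;t}^{\mathrm{g}}$ together with \eqref{eq: F-mutation} then closes the step. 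The heart of the computation rests on \Cref{lem: c g lemma} and \Cref{prop: relationship in C-matrices and G-matrices}. For a fixed column $(k,l)$, the row sums give $\sum_{m=1}^{r_j}c_{jm,kl;t}^{\mathrm{c}}=c_{jk;t}^{\mathrm{g}}$, and since off the diagonal block the exponents equal $\tilde{c}_{jk;t}^{\mathrm{c}}$ while the single diagonal entry equals $\tilde{c}_{kk;t}^{\mathrm{c}}+\sigma_{k;t}$, one also obtains $\sum_{m}[-c_{jm,kl;t}^{\mathrm{c}}]_+=[-c_{jk;t}^{\mathrm{g}}]_+$. These turn the $y$- and $F$-parts of $\prod_l P_{kl}$ into $\bigl(\prod_{j}y_j^{[-c_{jk;t}^{\mathrm{g}}]_+}(F_{j;t}^{\mathrm{g}})^{[-b_{jk;t}]_+}\bigr)^{r_k}$.

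For $\prod_l Q_{kl}$, separating the explicit $s$-exponents shows each factor equals $1+A\,s_{kl}^{\sigma_{k;t}}V$, where $A=\prod_{j}e_{jr_j}^{\tilde{c}_{jk;t}^{\mathrm{c}}}$ and $V=\prod_{j}y_j^{c_{jk;t}^{\mathrm{g}}}\prod_{j}\bigl(\prod_{m}F_{jm;t}^{\mathrm{c}}({\bf s}{\bf y})\bigr)^{b_{jk;t}}$ are both independent of $l$. Hence $\prod_l Q_{kl}=\prod_{l=1}^{r_k}(1+s_{kl}^{\sigma_{k;t}}AV)$ factors through the elementary symmetric polynomials exactly as in \Cref{lem: factrization for +}: after $\vert_{{\bf e}={\bf z}}$ one has $A\mapsto\prod_j z_{jr_j}^{\tilde{c}_{jk;t}^{\mathrm{c}}}=1$ and $V\mapsto\prod_j y_j^{c_{jk;t}^{\mathrm{g}}}\prod_j(F_{j;t}^{\mathrm{g}})^{b_{jk;t}}$, and the product becomes $\mathcal{Z}_{k;t_0}$ or $\mathcal{Z}_{k;t_0}^{\mathrm{rec}}$ evaluated there according to $\sigma_{k;t}=+1$ or $-1$, i.e.\ precisely $\mathcal{Z}_{k;t}(\prod_j y_j^{c_{jk;t}^{\mathrm{g}}}(F_{j;t}^{\mathrm{g}})^{b_{jk;t}})$, matching the $\mathcal{Z}_{k;t}$-factor of $M$.

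I expect the main obstacle to be the bookkeeping of the $\vert_{{\bf e}={\bf z}}$ substitution, specifically verifying that all spurious $s$-dependence collapses to $1$: after summing the exponent of each $s_{jm}$ over the mutation directions $l=1,\dots,r_k$, that exponent must be independent of $m$, so that $\prod_m s_{jm}^{(\cdots)}=e_{jr_j}^{(\cdots)}\mapsto 1$. This independence is exactly what \Cref{lem: c g lemma} supplies, the $j=k$ diagonal block requiring the identity $(r_k-1)[-\tilde{c}_{kk;t}^{\mathrm{c}}]_++[-\tilde{c}_{kk;t}^{\mathrm{c}}-\sigma_{k;t}]_+=[-c_{kk;t}^{\mathrm{g}}]_+$, which one checks case-by-case on the sign of $\tilde{c}_{kk;t}^{\mathrm{c}}$ using $c_{kk;t}^{\mathrm{g}}=r_k\tilde{c}_{kk;t}^{\mathrm{c}}+\sigma_{k;t}$. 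Once these collapses are in place, the surviving $y$-, $z$-, and $F$-factors line up termwise with \eqref{eq: M polynomial} and the distinction between $\mathcal{Z}_{k;t}$ and $\mathcal{Z}_{k;t}^{\mathrm{rec}}$ is tracked by $\sigma_{k;t}$ exactly as in the $Y$-pattern realization, completing the induction.
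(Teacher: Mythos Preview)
Your proposal is correct and follows essentially the same approach as the paper's proof: both derive the second equality immediately from \Cref{lem: symmrtry of F-polynomials}, prove the first by induction on $t$, split the $k$th $F$-recursion into the $P$- and $Q$-parts of \eqref{eq: polynomial P Q}, and use \Cref{lem: c g lemma} together with \Cref{prop: relationship in C-matrices and G-matrices} to collapse the $s$-exponents to powers of the $e_{jr_j}$ (hence to $1$) and to recognize $\mathcal{Z}_{k;t}$ via the elementary symmetric polynomial expansion. The only difference is organizational: the paper computes the $s$-exponent of $\prod_l P_{kl}$ by summing over $l$ for fixed $(j,m)$ via the third term of \eqref{eq: C-relation}, whereas you first sum over $m$ for fixed $(k,l)$ to extract the $y$-part and then verify $m$-independence of the residual $s$-exponent separately, making the diagonal case $j=k$ explicit through the identity $(r_k-1)[-\tilde{c}_{kk;t}^{\mathrm{c}}]_{+}+[-\tilde{c}_{kk;t}^{\mathrm{c}}-\sigma_{k;t}]_{+}=[-c_{kk;t}^{\mathrm{g}}]_{+}$.
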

\begin{proof}
The second equality follows from \Cref{lem: symmrtry of F-polynomials}. We show the first one. For $t=t_0$, the claim holds by definition.
Suppose that the claim holds for some $t \in \mathbb{T}_n$, and let $t' \in \mathbb{T}_n$ be $k$-adjacent to $t$. Then, we have $F_{k;t'}^{\mathrm{g}}({\bf y},{\bf z})=F_{k;t}^{\mathrm{g}}({\bf y},{\bf z})^{-1}M({\bf y},{\bf z})$, where $M$ is the polynomial defined in \eqref{eq: M polynomial}. On the other hand, we have $\prod_{l=1}^{r_k}F_{kl;t'}^{\mathrm{c}}({\bf s}{\bf y})=(\prod_{l=1}^{r_k}F_{kl;t}^{\mathrm{c}}({\bf s}{\bf y}))^{-1}P({\bf s}{\bf y})Q({\bf s}{\bf y})$, where
\begin{align}
P({\bf sy})&=\prod_{l=1}^{r_k}\prod_{(j,m) \in \mathcal{D}}(s_{jm}y_j)^{[-c_{jm,kl;t}^{\mathrm{c}}]_{+}}F_{jm;t}^{\mathrm{c}}({\bf sy})^{[-b_{jk;t}]_{+}},
\\
Q({\bf sy})&=\prod_{l=1}^{r_k}\biggl(1+\prod_{(j,m) \in \mathcal{D}}(s_{jm}y_{j})^{c_{jm,kl;t}^{\mathrm{c}}}F_{jm;t}^{\mathrm{c}}({\bf sy})^{b_{jk;t}}\biggr).\label{eq: polynomial Q}
\end{align}
Comparing the first and the third terms in \eqref{eq: C-relation}, we have
\begin{equation}
\prod_{l=1}^{r_k}\prod_{(j,m) \in \mathcal{D}}s_{jm}^{[-c_{jm,kl;t}^{\mathrm{c}}]_{+}}=\prod_{(j,m) \in \mathcal{D}}s_{jm}^{\frac{r_k}{r_j}[-c_{jk;t}^{\mathrm{g}}]_{+}}=\prod_{j=1}^{n}e_{jr_j}^{\frac{r_k}{r_j}[-c_{jk;t}^{\mathrm{c}}]_{+}}.
\end{equation}
Thus, by \eqref{eq: C-relation}, we obtain
\begin{equation}\label{eq: reduced form of the first part}
\begin{aligned}
P({\bf s}{\bf y})|_{{\bf e}={\bf z}} = 
\prod_{j=1}^{n}y_j^{r_k[-c_{jk;t}^{\mathrm{g}}]_{+}}F_{j;t}^{\mathrm{g}}({\bf y},{\bf z})^{r_k[-b_{jk;t}]_{+}}.
\end{aligned}
\end{equation}
Meanwhile, by \eqref{eq: definition of tilde c g}, we have
\begin{equation}\label{eq: s to one factor}
\prod_{(j,m) \in \mathcal{D}}s_{jm}^{c_{jm,kl;t}^{\mathrm{c}}}=s_{kl}^{\sigma_{k;t}}\prod_{j=1}^{n}e_{jr_j}^{\tilde{c}_{jk;t}^{\mathrm{c}}}.
\end{equation}
Thus, we have
\begin{equation}\label{eq: reduced form of Q}
\begin{aligned}
Q({\bf sy})|_{{\bf e}={\bf z}}
\overset{\eqref{eq: C-relation}}&{=}
\left.\prod_{l=1}^{r_k}\left(1+s_{kl}^{\sigma_{k;t}}\prod_{j=1}^{n}y_j^{c_{jk;t}^{\mathrm{g}}}F_{j;t}^{\mathrm{g}}({\bf y},{\bf z})^{b_{jk;t}}\right)\right|_{{\bf e}={\bf z}}
\\
&=\mathcal{Z}_{k;t}\left(\prod_{j=1}^{n}y_j^{c_{jk;t}^{\mathrm{g}}}F_{j;t}^{\mathrm{g}}({\bf y},{\bf z})^{b_{jk;t}}\right),
\end{aligned}
\end{equation}
where the second equality is shown by the same way as \eqref{eq: equivalence in P}.
By \eqref{eq: reduced form of the first part} and \eqref{eq: reduced form of Q}, we have $P({\bf s}{\bf y})Q({\bf s}{\bf y})|_{{\bf e}={\bf z}} = M({\bf y},{\bf z})$. Thus, the claim holds.
\end{proof}

\begin{remark}
The equality \eqref{eq: F-relation} does {\em not} directly imply the positivity of the coefficients in $F_{i;t}^{\mathrm{g}}$ (the Laurent positivity of $x_{i;t}^{\mathrm{g}}$ proved by \cite{BLM25b}) from the one of $F_{il;t}^{\mathrm{c}}$ due to possible signs such as  $s_1^2+s_2^2=(s_1+s_2)^2-2s_1s_2$.
\end{remark}

\subsection{Examples}
Let us give some examples of Propositions~\ref{prop: relationship in C-matrices and G-matrices} and \ref{prop: F-relation}.
We consider generalized cluster patterns of rank $2$. Let $t_0$ be the initial vertex, and consider the following vertices:
\begin{equation}
\begin{tikzpicture}
\draw (0,0)--(6,0);
\foreach \x in {0,1,2,3}
    {
    \fill ({2*\x},0) circle [radius=0.06];
    \draw ({2*\x},0) node [above] {$t_{\x}$};
    }
\draw (1,0) node [above] {$1$};
\draw (3,0) node [above] {$2$};
\draw (5,0) node [above] {$1$};
\end{tikzpicture}
\end{equation}
\par
(1) We consider the case
\begin{equation}\label{eq: conditions for the 1st example}
\begin{gathered}
B_{t_0}=\left(\begin{matrix}
0 & -1\\
1 & 0
\end{matrix}\right),
\ 
Z_{1;t_0}(u)=1+\hat{z}u+u^2,
\ 
Z_{2;t_0}(u)=1+u,
\\
\mathcal{B}_{t_0}=\left(\begin{array}{cc|c}
0 & 0 & -1\\
0 & 0 & -1\\
\hline
1 & 1 & 0
\end{array}\right).
\end{gathered}
\end{equation}
The $C$-matrices, the $G$-matrices, and the $F$-polynomials are given in \Cref{tab: 1st example}.
\begin{table}[htbp]
\centering
{
\begin{tabular}{c||c|c||c|c}
$t$ & $C^{\mathrm{g}}_t$ & $C^{\mathrm{c}}_t$ & $G_{t}^{\mathrm{g}}$ & $G_{t}^{\mathrm{c}}$\\
\hline
\rule[-2.5ex]{0mm}{6.25ex}
$t_0$
&
${\tiny
\renewcommand{\arraystretch}{1}
\renewcommand{\arraycolsep}{1pt}
\left(\begin{array}{cc}
1 & 0 \\
0 & 1 
\end{array}\right)}$
&
${\tiny 
\renewcommand{\arraystretch}{1}
\renewcommand{\arraycolsep}{1pt} 
\left(\begin{array}{cc|c}
1 & 0 & 0\\
0 & 1 & 0\\
\hline
0 & 0 & 1
\end{array}\right)}$
&
${\tiny
\renewcommand{\arraystretch}{1}
\renewcommand{\arraycolsep}{1pt}
\left(\begin{array}{cc}
1 & 0\\
0 & 1
\end{array}\right)}$
&
${\tiny
\renewcommand{\arraystretch}{1}
\renewcommand{\arraycolsep}{1pt}
\left(\begin{array}{cc|c}
1 & 0 & 0\\
0 & 1 & 0\\
\hline
0 & 0 & 1
\end{array}\right)}$
\\
\hline
\rule[-2.5ex]{0mm}{6.25ex}
$t_1$
&
${\tiny
\renewcommand{\arraystretch}{1}
\renewcommand{\arraycolsep}{1pt}
\left(\begin{array}{cc}
-1 & 0\\
0 & 1
\end{array}\right)}$
&
${\tiny
\renewcommand{\arraystretch}{1}
\renewcommand{\arraycolsep}{1pt}
\left(\begin{array}{cc|c}
-1 & 0 & 0\\
0 & -1 & 0\\
\hline
0 & 0 & 1
\end{array}\right)}$
&
${\tiny
\renewcommand{\arraystretch}{1}
\renewcommand{\arraycolsep}{1pt}
\left(\begin{array}{cc}
-1 & 0\\
0 & 1
\end{array}\right)}$
&
${\tiny
\renewcommand{\arraystretch}{1}
\renewcommand{\arraycolsep}{1pt}
\left(\begin{array}{cc|c}
-1 & 0 & 0\\
0 & -1 & 0\\
\hline
0 & 0 & 1
\end{array}\right)}$
\\[2mm]
\hline
\rule[-2.5ex]{0mm}{6.25ex}
$t_2$
&
${\tiny
\renewcommand{\arraystretch}{1}
\renewcommand{\arraycolsep}{1pt}
\left(\begin{array}{cc}
-1 & 0\\
0 & -1
\end{array}\right)}$
&
${\tiny 
\renewcommand{\arraystretch}{1}
\renewcommand{\arraycolsep}{1pt}
\left(\begin{array}{cc|c}
-1 & 0 & 0\\
0 & -1 & 0\\
\hline
0 & 0 & -1
\end{array}\right)}$
&
${\tiny 
\renewcommand{\arraystretch}{1}
\renewcommand{\arraycolsep}{1pt}
\left(\begin{array}{cc}
-1 & 0\\
0 & -1
\end{array}\right)}$
&
${\tiny 
\renewcommand{\arraystretch}{1}
\renewcommand{\arraycolsep}{1pt}
\left(\begin{array}{cc|c}
-1 & 0 & 0\\
0 & -1 & 0\\
\hline
0 & 0 & -1
\end{array}\right)}$
\\
\hline
\rule[-2.5ex]{0mm}{6.25ex}
$t_3$
&
${\tiny 
\renewcommand{\arraystretch}{1}
\renewcommand{\arraycolsep}{1pt}
\left(\begin{array}{cc}
-1 & -2\\
0 & -1
\end{array}\right)}$
&
${\tiny 
\renewcommand{\arraystretch}{1}
\renewcommand{\arraycolsep}{1pt}
\left(\begin{array}{cc|c}
1 & 0 & -1\\
0 & 1 & -1\\
\hline
0 & 0 & -1
\end{array}\right)}$
&
${\tiny 
\renewcommand{\arraystretch}{1}
\renewcommand{\arraycolsep}{1pt}
\left(\begin{array}{cc}
1 & 0\\
-2 & -1
\end{array}\right)}$
&
${\tiny 
\renewcommand{\arraystretch}{1}
\renewcommand{\arraycolsep}{1pt}
\left(\begin{array}{cc|c}
1 & 0 & 0\\
0 & 1 & 0\\
\hline
-1 & -1 & -1
\end{array}\right)}$
\end{tabular}}
{\renewcommand{\arraystretch}{1.5}
\renewcommand{\arraycolsep}{1pt}
\begin{tabular}{c||c|c}
$t$ & $F^{\mathrm{g}}_{i;t}$ & $F^{\mathrm{c}}_{il;t}$\\
\hline
$t_0$
&
{\tiny
\renewcommand{\arraystretch}{0.9}
\renewcommand{\arraycolsep}{1pt}
$\begin{array}{c}
1\\
1
\end{array}$}
&
{\tiny
\renewcommand{\arraystretch}{0.9}
\renewcommand{\arraycolsep}{1pt}
$\begin{array}{c}
1\\
1\\
1
\end{array}$}
\\
\hline
$t_1$
&
{\tiny
\renewcommand{\arraystretch}{0.9}
\renewcommand{\arraycolsep}{1pt}
$\begin{array}{c}
1+zy_1+y_1^2\\
1
\end{array}$}
&
{\tiny
\renewcommand{\arraystretch}{0.9}
\renewcommand{\arraycolsep}{1pt}
$\begin{array}{c}
1+y_{11}\\
1+y_{12}\\
1
\end{array}$}
\\
\hline
$t_2$
&
{\tiny
\renewcommand{\arraystretch}{0.9}
\renewcommand{\arraycolsep}{1pt}
$\begin{array}{c}
1+zy_1+y_1^2\\
1+y_2+zy_1y_2+y_1^2y_2
\end{array}$}
&
{\tiny
\renewcommand{\arraystretch}{0.9}
\renewcommand{\arraycolsep}{1pt}
$\begin{array}{c}
1+y_{11}\\
1+y_{12}\\
1+y_{21}+(y_{11}+y_{12})y_{21}+y_{11}y_{12}y_{21}
\end{array}$}
\\
\hline
$t_3$
&
{\tiny
\renewcommand{\arraystretch}{0.9}
\renewcommand{\arraycolsep}{1pt}
$\begin{array}{c}
1+2y_2+y_2^2+zy_1y_2+zy_1y_2^2+y_1^2y_2^2\\
1+y_2+zy_1y_2+y_1^2y_2
\end{array}$}
&
{\tiny
\renewcommand{\arraystretch}{0.9}
\renewcommand{\arraycolsep}{1pt}
$\begin{array}{c}
1+y_{21}+y_{12}y_{21}\\
1+y_{21}+y_{11}y_{21}\\
1+y_{21}+(y_{11}+y_{12})y_{21}+y_{11}y_{12}y_{21}
\end{array}$}
\end{tabular}}
\caption{$C$- and $G$-matrices and $F$-polynomials for Case~(1).
In each box on the column of $F_{i;t}^{\mathrm{g}}$, we put $F_{1;t}^{\mathrm{g}}$ and $F_{2;t}^{\mathrm{g}}$, and, on the column of $F_{il;t}^{\mathrm{c}}$, we put $F_{11;t}^{\mathrm{c}}$, $F_{12;t}^{\mathrm{c}}$, and $F_{21;t}^{\mathrm{c}}$ in this order.}
\label{tab: 1st example}
\end{table}
\\
The equality \eqref{eq: F-relation} holds for $i=1$ and $t=t_3$ as follows:
\begin{equation}
\begin{aligned}
&\ F_{11;t_3}^{\mathrm{c}}(s_{11}y_1,s_{12}y_1,s_{21}y_2)F_{12;t_3}^{\mathrm{c}}(s_{11}y_1,s_{12}y_1,s_{21}y_2)|_{{\bf e}={\bf z}}
\\
=&\ 
\{1+2s_{21}y_2+s_{21}^2y_2^2+(s_{11}+s_{12})s_{21}y_1y_2
\\
&\qquad
+(s_{11}+s_{12})s_{21}^2y_1y_2^2+s_{11}s_{12}s_{21}^2y_1^2y_2^2\}|_{{\bf e}={\bf z}}
\\
=&\ 
F_{1;t_3}^{\mathrm{g}}(y_1,y_2,z).
\end{aligned}
\end{equation}
\par
(2) To illustrate \Cref{prop: relationship in C-matrices and G-matrices} more evidently, we consider the case where the matrix $\mathcal{B}_{t_0}$ is the one in \eqref{eq: example of an enlargement}. The $C$-matrices and the $G$-matrices are given in \Cref{tab: 2nd example}.
\begin{table}[htbp]
{
\begin{tabular}{c||c|c||c|c}
$t$ & $C_t^{\mathrm{g}}$ & $C_t^{\mathrm{c}}$ & $G_t^{\mathrm{g}}$ & $G_t^{\mathrm{c}}$
\\
\hline
\rule[-4ex]{0mm}{9.25ex}
$t_0$
&
${\tiny
\renewcommand{\arraystretch}{1}
\renewcommand{\arraycolsep}{1pt}
\left(\begin{array}{cc}
1 & 0\\
0 & 1
\end{array}\right)}$
&
${\tiny 
\renewcommand{\arraystretch}{1}
\renewcommand{\arraycolsep}{1pt}
\left(\begin{array}{cc|ccc}
1 & 0 & 0 & 0 & 0\\
0 & 1 & 0 & 0 & 0\\
\hline
0 & 0 & 1 & 0 & 0\\
0 & 0 & 0 & 1 & 0\\
0 & 0 & 0 & 0 & 1
\end{array}\right)}$
&
${\tiny 
\renewcommand{\arraystretch}{1}
\renewcommand{\arraycolsep}{1pt}
\left(\begin{array}{cc}
1 & 0\\
0 & 1
\end{array}\right)}$
&
${\tiny
\renewcommand{\arraystretch}{1}
\renewcommand{\arraycolsep}{1pt}
\left(\begin{array}{cc|ccc}
1 & 0 & 0 & 0 & 0\\
0 & 1 & 0 & 0 & 0\\
\hline
0 & 0 & 1 & 0 & 0\\
0 & 0 & 0 & 1 & 0\\
0 & 0 & 0 & 0 & 1
\end{array}\right)}$
\\
\hline
\rule[-4ex]{0mm}{9.25ex}
$t_1$
&
${\tiny
\renewcommand{\arraystretch}{1}
\renewcommand{\arraycolsep}{1pt}
\left(\begin{array}{cc}
-1 & 2\\
0 & 1
\end{array}\right)}$
&
${\tiny
\renewcommand{\arraystretch}{1}
\renewcommand{\arraycolsep}{1pt}
\left(\begin{array}{cc|ccc}
-1 & 0 & 1 & 1 & 1\\
0 & -1 & 1 & 1 & 1\\
\hline
0 & 0 & 1 & 0 & 0\\
0 & 0 & 0 & 1 & 0\\
0 & 0 & 0 & 0 & 1
\end{array}\right)}$
&
${\tiny
\renewcommand{\arraystretch}{1}
\renewcommand{\arraycolsep}{1pt}
\left(\begin{array}{cc}
-1 & 0\\
4 & 1
\end{array}\right)}$
&
${\tiny
\renewcommand{\arraystretch}{1}
\renewcommand{\arraycolsep}{1pt}
\left(\begin{array}{cc|ccc}
-1 & 0 & 0 & 0 & 0\\
0 & -1 & 0 & 0 & 0\\
\hline
2 & 2 & 1 & 0 & 0\\
2 & 2 & 0 & 1 & 0\\
2 & 2 & 0 & 0 & 1
\end{array}\right)}$
\\
\hline
\rule[-4ex]{0mm}{9.25ex}
$t_2$
&
${\tiny
\renewcommand{\arraystretch}{1}
\renewcommand{\arraycolsep}{1pt}
\left(\begin{array}{cc}
11 & -2\\
6 & -1
\end{array}\right)}$
&
${\tiny
\renewcommand{\arraystretch}{1}
\renewcommand{\arraycolsep}{1pt}
\left(\begin{array}{cc|ccc}
5 & 6 & -1 & -1 & -1\\
6 & 5 & -1 & -1 & -1\\
\hline
2 & 2 & -1 & 0 & 0\\
2 & 2 & 0 & -1 & 0\\
2 & 2 & 0 & 0 & -1
\end{array}\right)}$
&
${\tiny
\renewcommand{\arraystretch}{1}
\renewcommand{\arraycolsep}{1pt}
\left(\begin{array}{cc}
-1 & -3\\
4 & 11
\end{array}\right)}$
&
${\tiny
\renewcommand{\arraystretch}{1}
\renewcommand{\arraycolsep}{1pt}
\left(\begin{array}{cc|ccc}
-1 & 0 & -1 & -1 & -1\\
0 & -1 & -1 & -1 & -1\\
\hline
2 & 2 & 3 & 4 & 4\\
2 & 2 & 4 & 3 & 4\\
2 & 2 & 4 & 4 & 3
\end{array}\right)}$
\\
\hline
\rule[-4ex]{0mm}{9.25ex}
$t_3$
&
${\tiny
\renewcommand{\arraystretch}{1}
\renewcommand{\arraycolsep}{1pt}
\left(\begin{array}{cc}
-11 & 20\\
-6 & 11
\end{array}\right)}$
&
${\tiny
\renewcommand{\arraystretch}{1}
\renewcommand{\arraycolsep}{1pt}
\left(\begin{array}{cc|ccc}
-5 & -6 & 10 & 10 & 10\\
-6 & -5 & 10 & 10 & 10\\
\hline
-2 & -2 & 3 & 4 & 4\\
-2 & -2 & 4 & 3 & 4\\
-2 & -2 & 4 & 4 & 3
\end{array}\right)}$
&
${\tiny
\renewcommand{\arraystretch}{1}
\renewcommand{\arraycolsep}{1pt}
\left(\begin{array}{cc}
-11 & -3\\
40 & 11
\end{array}\right)}$
&
${\tiny
\renewcommand{\arraystretch}{1}
\renewcommand{\arraycolsep}{1pt}
\left(\begin{array}{cc|ccc}
-5 & -6 & -1 & -1 & -1\\
-6 & -5 & -1 & -1 & -1\\
\hline
20 & 20 & 3 & 4 & 4\\
20 & 20 & 4 & 3 & 4\\
20 & 20 & 4 & 4 & 3
\end{array}\right)}$
\end{tabular}}
\caption{$C$- and $G$-matrices for Case~(2).}
\label{tab: 2nd example}
\end{table}

\FloatBarrier
\bibliography{Relation_between_generalized_and_ordinary_cluster_algebras}

@preamble{"\newcommand{\noop}[1]{}"}

@incollection {NR16,
    AUTHOR = {Nakanishi, Tomoki and Rupel, Dylan},
     TITLE = {Companion cluster algebras to a generalized cluster algebra},
 BOOKTITLE = {Travaux math\'ematiques. {V}ol. {XXIV}},
    SERIES = {Trav. Math.},
    VOLUME = {24},
     PAGES = {129--149},
 PUBLISHER = {Fac. Sci. Technol. Commun. Univ. Luxemb., Luxembourg},
      YEAR = {2016},
      ISBN = {978-2-87971-167-6},
   MRCLASS = {13F60},
  MRNUMBER = {3643935},
MRREVIEWER = {Fan\ Qin},
}

@misc{RW25,
  title={Generalized cluster algebras are subquotients of cluster algebras},
  author={Ramos, Rolando and Whiting, David},
  howpublished={arXiv preprint arXiv:2504.20931},
  year={2025}
}

@article {FZ02,
    AUTHOR = {Fomin, Sergey and Zelevinsky, Andrei},
     TITLE = {Cluster algebras. {I}. {F}oundations},
   JOURNAL = {J. Amer. Math. Soc.},
  FJOURNAL = {Journal of the American Mathematical Society},
    VOLUME = {15},
      YEAR = {2002},
    NUMBER = {2},
     PAGES = {497--529},
      ISSN = {0894-0347,1088-6834},
   MRCLASS = {16S99 (14M99 17B99)},
  MRNUMBER = {1887642},
MRREVIEWER = {Eric\ N.\ Sommers},
       DOI = {10.1090/S0894-0347-01-00385-X},
       URL = {https://doi.org/10.1090/S0894-0347-01-00385-X},
}

@article {Nak15a,
    AUTHOR = {Nakanishi, Tomoki},
     TITLE = {Structure of seeds in generalized cluster algebras},
   JOURNAL = {Pacific J. Math.},
  FJOURNAL = {Pacific Journal of Mathematics},
    VOLUME = {277},
      YEAR = {\noop{a}2015},
    NUMBER = {1},
     PAGES = {201--217},
      ISSN = {0030-8730,1945-5844},
   MRCLASS = {13F60},
  MRNUMBER = {3393688},
MRREVIEWER = {Hugh\ Ross\ Thomas},
       DOI = {10.2140/pjm.2015.277.201},
       URL = {https://doi.org/10.2140/pjm.2015.277.201},
}

@article {CS14,
    AUTHOR = {Chekhov, Leonid and Shapiro, Michael},
     TITLE = {Teichm\"uller spaces of {R}iemann surfaces with orbifold
              points of arbitrary order and cluster variables},
   JOURNAL = {Int. Math. Res. Not. IMRN},
  FJOURNAL = {International Mathematics Research Notices. IMRN},
      YEAR = {2014},
    NUMBER = {10},
     PAGES = {2746--2772},
      ISSN = {1073-7928,1687-0247},
   MRCLASS = {32G15 (13F60 30F60)},
  MRNUMBER = {3214284},
       DOI = {10.1093/imrn/rnt016},
       URL = {https://doi.org/10.1093/imrn/rnt016},
}

@misc{Nak24,
  title={Addendum to {``Structure of seeds in generalized cluster algebras"}},
  author={Nakanishi, Tomoki},
  howpublished={arXiv preprint arXiv:2406.07582},
  year={2024}
}

@article {FZ07,
    AUTHOR = {Fomin, Sergey and Zelevinsky, Andrei},
     TITLE = {Cluster algebras. {IV}. {C}oefficients},
   JOURNAL = {Compos. Math.},
  FJOURNAL = {Compositio Mathematica},
    VOLUME = {143},
      YEAR = {2007},
    NUMBER = {1},
     PAGES = {112--164},
      ISSN = {0010-437X,1570-5846},
   MRCLASS = {16S99 (05E15 14M17 22E46)},
  MRNUMBER = {2295199},
MRREVIEWER = {Christof\ Gei\ss},
       DOI = {10.1112/S0010437X06002521},
       URL = {https://doi.org/10.1112/S0010437X06002521},
}

@article {Nak15b,
    AUTHOR = {Nakanishi, Tomoki},
     TITLE = {Quantum generalized cluster algebras and quantum dilogarithm
              functions of higher degrees},
   JOURNAL = {Teoret. Mat. Fiz.},
  FJOURNAL = {Teoreticheskaya i Matematicheskaya Fizika},
    VOLUME = {185},
      YEAR = {\noop{b}2015},
    NUMBER = {3},
     PAGES = {460--470},
      ISSN = {0564-6162,2305-3135},
   MRCLASS = {81Q20 (13F60 33B30)},
  MRNUMBER = {3438630},
MRREVIEWER = {Rinat\ M.\ Kashaev},
       DOI = {10.4213/tmf8897},
       URL = {https://doi.org/10.4213/tmf8897},
}

@article {BCMX18,
    AUTHOR = {Bai, Liqian and Chen, Xueqing and Ding, Ming and Xu, Fan},
     TITLE = {A quantum analog of generalized cluster algebras},
   JOURNAL = {Algebr. Represent. Theory},
  FJOURNAL = {Algebras and Representation Theory},
    VOLUME = {21},
      YEAR = {2018},
    NUMBER = {6},
     PAGES = {1203--1217},
      ISSN = {1386-923X,1572-9079},
   MRCLASS = {13F60 (16G20 17B67 18E30)},
  MRNUMBER = {3874742},
MRREVIEWER = {Fang\ Li},
       DOI = {10.1007/s10468-017-9743-7},
       URL = {https://doi.org/10.1007/s10468-017-9743-7},
}

@article {FPY24,
    AUTHOR = {Fu, Changjian and Peng, Liangang and Ye, Huihui},
     TITLE = {On {$F$}-polynomials for generalized quantum cluster algebras
              and {G}upta's formula},
   JOURNAL = {SIGMA Symmetry Integrability Geom. Methods Appl.},
  FJOURNAL = {SIGMA. Symmetry, Integrability and Geometry. Methods and
              Applications},
    VOLUME = {20},
      YEAR = {2024},
     PAGES = {Paper No. 080, 26},
      ISSN = {1815-0659},
   MRCLASS = {13F60 (05E16 16S34)},
  MRNUMBER = {4843369},
MRREVIEWER = {Emine\ Y\i ld\i r\i m},
       DOI = {10.3842/SIGMA.2024.080},
       URL = {https://doi.org/10.3842/SIGMA.2024.080},
}

@article {CKM21,
    AUTHOR = {Cheung, Man-Wai and Kelley, Elizabeth and Musiker, Gregg},
     TITLE = {Cluster scattering diagrams and theta basis for reciprocal
              generalized cluster algebras},
   JOURNAL = {S\'em. Lothar. Combin.},
  FJOURNAL = {S\'eminaire Lotharingien de Combinatoire},
    VOLUME = {85B},
      YEAR = {2021},
     PAGES = {Art. 86, 12},
      ISSN = {1286-4889},
   MRCLASS = {13F60},
  MRNUMBER = {4311967},
}

@article {Mou24,
    AUTHOR = {Mou, Lang},
     TITLE = {Scattering diagrams for generalized cluster algebras},
   JOURNAL = {Algebra Number Theory},
  FJOURNAL = {Algebra \& Number Theory},
    VOLUME = {18},
      YEAR = {2024},
    NUMBER = {12},
     PAGES = {2179--2246},
      ISSN = {1937-0652,1944-7833},
   MRCLASS = {13F60},
  MRNUMBER = {4817459},
MRREVIEWER = {Liqian\ Bai},
       DOI = {10.2140/ant.2024.18.2179},
       URL = {https://doi.org/10.2140/ant.2024.18.2179},
}

@article {BLM25a,
    AUTHOR = {Burcroff, Amanda and Lee, Kyungyong and Mou, Lang},
     TITLE = {Scattering diagrams, tight gradings, and generalized
              positivity},
   JOURNAL = {Proc. Natl. Acad. Sci. USA},
  FJOURNAL = {Proceedings of the National Academy of Sciences of the United
              States of America},
    VOLUME = {122},
      YEAR = {\noop{a}2025},
    NUMBER = {18},
     PAGES = {Paper No. e2422893122, 11},
      ISSN = {0027-8424,1091-6490},
   MRCLASS = {13F60 (05A15 16G20)},
  MRNUMBER = {4912377},
       DOI = {10.1073/pnas.2422893122},
       URL = {https://doi.org/10.1073/pnas.2422893122},
}

@misc{BLM25b,
  title={Positivity of generalized cluster scattering diagrams},
  author={Burcroff, Amanda and Lee, Kyungyong and Mou, Lang},
  howpublished={arXiv preprint arXiv:2503.03719},
  year={\noop{b}2025}
}
\bibliographystyle{alpha}
\end{document}